\newtheorem{theorem}{Theorem}
\newtheorem{proposition}[theorem]{Proposition}
\newtheorem{corollary}[theorem]{Corollary}
\theoremstyle{definition}
\newtheorem{definition}[theorem]{Definition}
\theoremstyle{remark}
\newtheorem*{remark}{Remark}
\DeclareMathOperator{\G}{G}
\DeclareMathOperator{\GL}{GL}
\DeclareMathOperator{\Adj}{Ad}
\DeclareMathOperator{\adj}{ad}
\title[Abstract integrable systems on hyperk\"ahler manifolds]{Abstract integrable systems on hyperk\"ahler manifolds arising from Slodowy slices}
\author{Peter Crooks}
\address{Institute of Differential Geometry, Gottfried Wilhelm Leibniz Universit\"{a}t Hannover, Welfengarten 1, 30167 Hannover, Germany}
\email{~~~peter.crooks@math.uni-hannover.de}
\author{Steven Rayan}
\address{Department of Mathematics \& Statistics, University of Saskatchewan, McLean Hall, Wiggins Road, Saskatoon, SK, S7N 5E6, Canada}
\email{~~~rayan@math.usask.ca}
\begin{document}
	
	\begin{abstract}
		We study holomorphic integrable systems on the hyperk\"ahler manifold $G\times S_{\text{reg}}$, where $G$ is a complex semisimple Lie group and $S_{\text{reg}}$ is the Slodowy slice determined by a regular $\mathfrak{sl}_2(\mathbb{C})$-triple. Our main result is that this manifold carries a canonical \textit{abstract integrable system}, a foliation-theoretic notion recently introduced by Fernandes, Laurent-Gengoux, and Vanhaecke. We also construct traditional integrable systems on $G\times S_{\text{reg}}$, some of which are completely integrable and fundamentally based on Mishchenko and Fomenko's argument shift approach.
	\end{abstract}
	
	\maketitle
	\section{Introduction}
	
	Generalizing the completely integrable systems of mathematical physics are ones described variously as \emph{noncommutative}, \emph{degenerately integrable}, \emph{superintegrable}, or \emph{non-Liouville}.  Such integrable systems have Liouville fibres whose dimensions may be less than half of that of the total space.  They arise in a variety of contexts \cite{Bolsinov,Fiorani,MirandaKiesenhofer,MirandaLaurantVanhaecke,Mishchenko,Reshetikhin}.  In this paper, we adopt the terminology \emph{integrable system of fixed rank}. More precisely, we use the following definition of integrable system in the holomorphic category (cf. \cite[Def. 2.1]{Fernandes}):
	
	\begin{definition}\label{Definition: NCIS}
		Let $X$ be an $n$-dimensional holomorphic symplectic manifold and denote by $\{\cdot,\cdot\}$ the induced Poisson bracket on its sheaf of holomorphic functions. 
		
			 An \textit{integrable system of fixed rank} $r$ on $X$ consists of holomorphic functions$$f_1,f_2,\ldots,f_{n-r}:X\rightarrow\mathbb{C}$$such that:
			\begin{itemize}
				\item[$\bullet$] $r\leq n/2$,
				\item[$\bullet$] $\{f_i,f_j\}=0$ for all $i\in\{1,2,\ldots,r\}$ and $j\in\{1,2,\ldots,n-r\}$, and
				\item[$\bullet$] $f_1,f_2,\ldots,f_{n-r}$ are functionally independent, i.e. the holomorphic $1$-forms \\ $df_1,df_2,\ldots,df_{n-r}$ are linearly independent at each point in an open dense subset of $X$.
			\end{itemize}

	\end{definition}
	
	Note that we recover the usual notion of a completely integrable system by taking $r=n/2$.

	Now, assume that $df_1,df_2,\ldots,df_{n-r}$ are linearly independent at each point of $X$.  It follows that 
	\begin{eqnarray}F\;:\;X & \rightarrow & \mathbb{C}^{n-r}\nonumber\\ x & \mapsto & (f_1(x),f_2(x),\ldots,f_{n-r}(x))\nonumber\end{eqnarray}
	is a holomorphic submersion, whose image is then necessarily open in $\mathbb{C}^{n-r}$. Each fibre is an $r$-dimensional complex submanifold of $X$.  The connected components of all such submanifolds are the leaves of a holomorphic foliation $\mathcal{F}$ of $X$. Note that $f_1,f_2,\ldots,f_{n-r}$ are first integrals of $\mathcal{F}$, meaning that they are constant on the leaves of $\mathcal{F}$, while one can verify that the Hamiltonian vector fields of $f_1,f_2,\ldots,f_{r}$ span $T\mathcal{F}\subseteq TX$ (see \cite[Prop. 2.5]{Fernandes}, for example). These observations motivate a useful definition, introduced by Fernandes, Laurent-Gengoux, and Vanhaecke \cite{Fernandes} to study integrable systems by way of the foliations that they induce. The following is Definition 2.6 from \cite{Fernandes}, adapted to the holomorphic setting and with the term ``noncommutative'' omitted.           
	
	\begin{definition}\label{Definition: ANCIS}
		An \emph{abstract integrable system of rank} $r$ is a pair, $(X,\mathcal{F})$, consisting of a holomorphic symplectic manifold $X$ with an $r$-dimensional holomorphic foliation $\mathcal{F}$, satisfying the following condition: each point $x\in X$ admits an open neighbourhood $U\subseteq X$, together with holomorphic first integrals defined on $U$ whose Hamiltonian vector fields span $T\mathcal{F}$ on $U$.  
	\end{definition} 
	
	The main goal of this work is to construct a hyperk\"ahler variety and an abstract integrable system on it, for which we now give some context and motivation. Recall that a hyperk\"ahler manifold carries a distinguished holomorphic symplectic form, so that one may study integrable systems on hyperk\"ahler manifolds. Moreover, it is from gauge theory that there have emerged deep connections between completely integrable systems and hyperk\"ahler geometry. A particular manifestation of this is the Hitchin system, originating in \cite{Hitchin}. In its typical form, the Hitchin system is an algebraically completely integrable Hamiltonian system defined on the moduli space of stable $G$-Higgs bundles over a fixed algebraic curve or Riemann surface, where $G$ is a reductive complex Lie group.  The moduli space is a noncompact hyperk\"ahler manifold which fits into the picture of Strominger-Yau-Zaslow mirror symmetry through the invariant tori of the dynamical system \cite{HauselThaddeus}.  This proper torus fibration is generally known as the Hitchin map.  

	Recognizing the rich interplay between completely integrable systems and hyperk\"ahler geometry, as well as the recent emergence of abstract integrable systems, we are motivated to think about possible connections between abstract integrable systems and hyperk\"ahler geometry. A natural first step is to construct non-trivial examples of abstract integrable systems carrying hyperk\"ahler structures. It is to the construction of these examples that we devote most of our paper. We also include a few brief digressions on related subjects, such as connections between moment maps and abstract integrable systems, as well as the construction of traditional integrable systems.

	While this helps to explain our motivations, we do believe that the abstract system constructed below is of independent interest as a canonical abstract integrable system associated to purely Lie-theoretic data.

	\subsection{Structure and statement of results} 
	Our paper is organized as follows: Section \ref{Section: Background} introduces the manifold$$M\;:=\;G\times S_\text{reg},$$ where $G$ is a connected, simply-connected, complex semisimple linear algebraic group with Lie algebra $\mathfrak{g}$, and $S_\text{reg}\subseteq\mathfrak{g}$ is the Slodowy slice determined by a regular $\mathfrak{sl}_2(\mathbb C)$-triple (defined formally in Section \ref{SubsectionSlodowy}). Following Bielawski in \cite{Bielawski}, $M$ has a canonical hyperk\"ahler structure (see Theorem \ref{Theorem: Bielawski's theorem}).
	
	In Section \ref{Section: Main results}, we consider the holomorphic map
	\begin{eqnarray}\Phi\;:\;M & \rightarrow & \mathfrak{g}\nonumber\\ (g,x) & \mapsto & -\Adj_{g^{-1}}(x),\nonumber\end{eqnarray} where $\Adj:G\rightarrow\GL(\mathfrak{g})$ is the adjoint representation. We show that $\Phi$ is a holomorphic submersion with image the set of regular elements $\mathfrak g_\text{reg}\subseteq\mathfrak{g}$, and that each fibre $\Phi^{-1}(x)\subseteq M$ is an isotropic subvariety of dimension $\mbox{rank}(G)$ that is isomorphic to the $G$-stabilizer of $x$.  These results are contained in Proposition \ref{Proposition: Surjective submersion}, Corollary \ref{Corollary: Connected components}, and Proposition \ref{Proposition: Isotropic fibres}.  Furthermore, $\Phi$ is a map of Poisson varieties for the holomorphic symplectic structure on $M$ and the Poisson structure on $\mathfrak g$ coming from the Killing form-induced isomorphism $\mathfrak{g}\cong\mathfrak{g}^*$.  This is Proposition \ref{Proposition: Poisson morphism}.  Next, we foliate $M$ into the connected components of $\Phi$'s fibres, and argue in Theorem \ref{Theorem: Phi is ANCI} that this constitutes an abstract integrable system of rank equal to $\text{rank}(G)$. Section \ref{Section: Main results} concludes with some consideration of Theorem \ref{Theorem: Phi is ANCI} in a more general context. We give conditions under which a moment map will, analogously to $\Phi$ in Theorem \ref{Theorem: Phi is ANCI}, induce an abstract integrable system. These results are contained in Theorem \ref{Theorem: General symplectic approach}. 
	
	Section \ref{Section: Some integrable systems} applies the results of Section \ref{Section: Main results} to construct traditional integrable systems on $G\times S_{\text{reg}}$. The first such system appears in Section \ref{Subsection: An integrable system of rank equal to rk(G)}, and is an integrable system of rank equal to $\text{rank}(G)$ (see Theorem \ref{Theorem: An integrable system}). In Section \ref{Subsection: A family of completely integrable systems}, we use Mishchenko and Fomenko's argument shift approach \cite{MishchenkoEuler} to construct a family of completely integrable systems on $G\times S_{\text{reg}}$. This results in Theorem \ref{Theorem: Family of completely integrable systems}.

	Before proceeding with our construction, we make a few informal remarks regarding analogies with the Hitchin system: unlike the Hitchin fibration, which is necessarily proper, the fibration given by $\Phi$ is one whose generic fibres are noncompact complex tori (see Section \ref{Subsection: Structure of the fibres}).  Interestingly, this mirrors certain Hitchin-type examples in Section 3 of \cite{Vanhaecke}. Moreover, our abstract integrable system is akin to the Hitchin-type systems studied by Beauville, Markman, Biswas-Ramanan, and Bottacin in \cite{Beauville,Markman,BiswasRamanan,Bottacin}, respectively, in that the dimension of the base is allowed to exceed that of the fibres.  We also recognize that there is a well-known passage between Hitchin systems and Mishchenko-Fomenko flows: \cite{AdamsHarnadHurtubise} makes a connection between Adler-Kostant-Symes flows and the flows of \cite{MishchenkoEuler}, while \cite{DonagiMarkman} explains the route from the AKS picture to Hitchin systems.

	\subsection*{Acknowledgements}  We thank Roger Bielawski, Hartmut Wei\ss, and Jonathan Weitsman for useful feedback at several points during the writing of this manuscript.  The first author is grateful to Stefan Rosemann and Markus R\"oser for several interesting discussions while the second author likewise acknowledges Jacek Szmigielski.  The first author was supported by a postdoctoral fellowship at the Institute of Differential Geometry, Leibniz Universit\"{a}t Hannover. The second author was supported by a University of Saskatchewan New Faculty Research Grant.\\

	\section{Background}\label{Section: Background}
	\subsection{Lie-theoretic preliminaries}\label{Subsection: Some general Lie-theoretic preliminaries}
	Let $G$ be a connected, simply-connected, complex semisimple linear algebraic group and denote its rank by $\text{rk}(G)$. We shall use $\mathfrak{g}$ to denote the Lie algebra of $G$, on which one has a Killing form $\langle\cdot,\cdot\rangle:\mathfrak{g}\otimes\mathfrak{g}\rightarrow\mathbb{C}$ and exponential map $\exp:\mathfrak{g}\rightarrow G$.
	One also has the adjoint and coadjoint representations of $G$, denoted \begin{eqnarray}\Adj\;:\;G & \rightarrow & \GL(\mathfrak{g})\nonumber\\ g & \mapsto & \Adj_g\nonumber\end{eqnarray}and\begin{eqnarray}\Adj^*\;:\;G & \rightarrow & \GL(\mathfrak{g^*})\nonumber\\g & \mapsto & \Adj^*_g,\nonumber\end{eqnarray} respectively. Since the Killing form is non-degenerate and invariant under the first representation, the following is an isomorphism between the adjoint and coadjoint representations:
	\begin{equation}\label{Equation: Isomorphism of representations}
	\mathfrak{g}\xrightarrow{\cong}\mathfrak{g}^*,\quad x\mapsto x^{\vee}:=\langle x,\cdot\rangle.
	\end{equation}
	The canonical Lie-Poisson structure on $\mathfrak{g}^*$ (see \cite[Prop. 1.3.18]{Chriss}) thereby corresponds to a holomorphic Poisson structure on $\mathfrak{g}$, whose symplectic leaves turn out to be the adjoint orbits of $G$. We shall let $\mathcal{O}(x)$ shall denote the adjoint orbit containing $x\in\mathfrak{g}$, i.e.
	$$\mathcal{O}(x):=\{\Adj_g(x):g\in G\}\subseteq\mathfrak{g}.$$ 
	
	Now let \begin{eqnarray}\adj\;:\;\mathfrak{g} & \rightarrow & \mathfrak{gl}(\mathfrak{g})\nonumber\\ x & \mapsto & \adj_x\nonumber\end{eqnarray} denote the adjoint representation of $\mathfrak{g}$. Recall that $\adj$ is the derivative of $\Adj$ at the identity $e\in G$ and satisfies $\adj_x(y)=[x,y]$ for all $x,y\in\mathfrak{g}$. Furthermore, recall that $x\in\mathfrak{g}$ is called \textit{semisimple} (resp. \textit{nilpotent}) if the endomorphism $\adj_x:\mathfrak{g}\rightarrow\mathfrak{g}$ is semisimple (resp. nilpotent). Denote by $\mathfrak{g}_{\text{ss}}$ and $\mathcal{N}$ the sets of semisimple and nilpotent elements in $\mathfrak{g}$, respectively. The former is an open dense subvariety of $\mathfrak{g}$ while the latter is a closed subvariety called the \textit{nilpotent cone}. Each is readily seen to be invariant under the adjoint representation of $G$, and one calls an adjoint orbit $\mathcal{O}\subseteq\mathfrak{g}$ \textit{semisimple} (resp. \textit{nilpotent}) if $\mathcal{O}\subseteq\mathfrak{g}_{\text{ss}}$ (resp. $\mathcal{O}\subseteq\mathcal{N}$). 
	
	We shall use $Z_G(x)$ to denote the $G$-stabilizer of $x\in\mathfrak{g}$, i.e. $$Z_G(x):=\{g\in G:\Adj_g(x)=x\}.$$ Its Lie algebra is the $\mathfrak{g}$-stabilizer of $x$, namely $$Z_{\mathfrak{g}}(x):=\{y\in\mathfrak{g}:[x,y]=0\}.$$ Now recall that the dimension of $Z_{\mathfrak{g}}(x)$ (equivalently, the dimension of $Z_G(x)$) is at least $\text{rk}(G)$ for all $x\in\mathfrak{g}$, and that $x$ is called \textit{regular} when $\dim(Z_{\mathfrak{g}}(x))=\text{rk}(G)$. Note that while some authors also require regular elements to be semisimple, we are not imposing this extra condition. 
	
	Let us consider the set of regular elements,
	$$\mathfrak{g}_{\text{reg}}:=\{x\in\mathfrak{g}:x\text{ is regular}\},$$ which is known to be a $G$-invariant open dense subset of $\mathfrak{g}$ (see \cite[Lemma 6.51]{Kirillov}). We shall call an adjoint orbit $\mathcal{O}\subseteq\mathfrak{g}$ \textit{regular} if $\mathcal{O}\subseteq\mathfrak{g}_{\text{reg}}$. Note that while there exist infinitely many distinct regular semisimple orbits, there exists exactly one regular nilpotent orbit, to be denoted $\mathcal{O}_{\text{reg}}$ (see \cite[Prop. 3.2.10]{Chriss}, for example).
	
	\subsection{Regular $\mathfrak{sl}_2(\mathbb{C})$-triples and Slodowy slices}\label{SubsectionSlodowy}
	Given three vectors $\xi,h,\eta\in\mathfrak{g}$, recall that $(\xi,h,\eta)$ is called an $\mathfrak{sl}_2(\mathbb{C})$-triple if $[\xi,\eta]=h$, $[h,\xi]=2\xi$, and $[h,\eta]=-2\eta$. The elements $\xi$ and $\eta$ are then necessarily nilpotent and belong to the same nilpotent orbit. With this in mind, we shall call our $\mathfrak{sl}_2(\mathbb{C})$-triple $(\xi,h,\eta)$ \textit{regular} if $\xi,\eta\in\mathcal{O}_{\text{reg}}$. We will often restrict our attention to triples of this form.
	
	Now fix an $\mathfrak{sl}_2(\mathbb{C})$-triple $(\xi,h,\eta)$. One can associate a \textit{Slodowy slice} to this triple, namely the affine-linear subspace $S(\xi,h,\eta)\subseteq\mathfrak{g}$ defined as follows: 
	$$S(\xi,h,\eta):=\xi+Z_{\mathfrak{g}}(\eta):=\{\xi+x:x\in Z_{\mathfrak{g}}(\eta)\}.$$ Named in recognition of Slodowy's work \cite{Slodowy}, this variety is well-studied and enjoys a number of remarkable properties. To formulate one of these properties, recall that two smooth subvarieties $Y,Z\subseteq\mathfrak{g}$ are called \textit{transverse} if for all $x\in Y\cap Z$, we have $T_xY+T_xZ=\mathfrak{g}$ as vector spaces. It turns out that if $(\xi,h,\eta)$ is an $\mathfrak{sl}_2(\mathbb{C})$-triple, then $S(\xi,h,\eta)$ and $\mathcal{O}(\xi)$ are transverse and intersect only at $\xi$.       
	
	It is desirable to have a more precise understanding of orbit-slice intersections when the underlying $\mathfrak{sl}_2(\mathbb{C})$-triple is regular. The following are parts of Theorem 8 and Lemma 13 from Kostant's paper \cite{KostantLie}, rephrased to suit our purposes.
	
	\begin{theorem}[Kostant]\label{Theorem: Kostant's theorem}
		If $(\xi,h,\eta)$ is a regular $\mathfrak{sl}_2(\mathbb{C})$-triple, then $S(\xi,h,\eta)\subseteq\mathfrak{g}_{\emph{reg}}$ and
		each regular adjoint orbit intersects $S(\xi,h,\eta)$ in a single point. Moreover, $S(\xi,h,\eta)$ is transverse to each regular adjoint orbit.  
	\end{theorem}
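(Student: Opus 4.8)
The plan is to base everything on a contracting $\mathbb{C}^{*}$-action on $\mathfrak{g}$ that preserves the slice. Let $\rho\colon\mathbb{C}^{*}\to G$ be the one-parameter subgroup with $\tfrac{d}{dt}\big|_{t=1}\rho(t)=h$, so that $\Adj_{\rho(t)}$ acts by the scalar $t^{j}$ on each eigenspace $\mathfrak{g}_{j}:=\{y\in\mathfrak{g}:[h,y]=jy\}$, and define $\beta(t,x):=t^{2}\Adj_{\rho(t)^{-1}}(x)$ for $t\in\mathbb{C}^{*}$ and $x\in\mathfrak{g}$. Since $\xi\in\mathfrak{g}_{2}$ and, by $\mathfrak{sl}_{2}(\mathbb{C})$-representation theory applied to $\mathfrak{g}$, $Z_{\mathfrak{g}}(\eta)=\bigoplus_{j\le 0}\bigl(Z_{\mathfrak{g}}(\eta)\cap\mathfrak{g}_{j}\bigr)$, one checks that $\beta(t,\cdot)$ fixes $\xi$, preserves $S(\xi,h,\eta)$, acts on the $\mathfrak{g}_{j}$-summand of $Z_{\mathfrak{g}}(\eta)$ by $t^{2-j}$ with $2-j\ge 2$, and therefore satisfies $\lim_{t\to 0}\beta(t,x)=\xi$ for every $x\in S(\xi,h,\eta)$, with $\xi$ its only fixed point in the slice. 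The inclusion $S(\xi,h,\eta)\subseteq\mathfrak{g}_{\text{reg}}$ then follows: $\mathfrak{g}_{\text{reg}}$ is open and invariant under $\Adj_{G}$ and under rescaling, hence $\beta$-invariant, and since $\xi\in\mathcal{O}_{\text{reg}}\subseteq\mathfrak{g}_{\text{reg}}$ the set $S(\xi,h,\eta)\cap\mathfrak{g}_{\text{reg}}$ is a $\beta$-invariant open neighbourhood of $\xi$ in $S(\xi,h,\eta)$; flowing an arbitrary point of the slice into it gives the claim.

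For transversality I would first record the direct sum decomposition $\mathfrak{g}=Z_{\mathfrak{g}}(\eta)\oplus\adj_{\xi}(\mathfrak{g})$, again from $\mathfrak{sl}_{2}(\mathbb{C})$-theory (the lowest-weight spaces complement the image of the raising operator $\adj_{\xi}$). Consider the open set $P:=\{x\in\mathfrak{g}:Z_{\mathfrak{g}}(\eta)+\adj_{x}(\mathfrak{g})=\mathfrak{g}\}$, which contains $\xi$. Using $\adj_{\beta(t,x)}=t^{2}\Adj_{\rho(t)^{-1}}\circ\adj_{x}\circ\Adj_{\rho(t)}$ together with the fact that $\Adj_{\rho(t)^{-1}}$ preserves $Z_{\mathfrak{g}}(\eta)$, one sees that $P$ is $\beta$-invariant, so the same flowing argument yields $S(\xi,h,\eta)\subseteq P$. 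Since every $x\in S(\xi,h,\eta)$ is regular, $\dim Z_{\mathfrak{g}}(\eta)=\operatorname{rk}(G)=\dim Z_{\mathfrak{g}}(x)$, which forces the sum $Z_{\mathfrak{g}}(\eta)+\adj_{x}(\mathfrak{g})=\mathfrak{g}$ to be direct; as $T_{x}S(\xi,h,\eta)=Z_{\mathfrak{g}}(\eta)$ and $T_{x}\mathcal{O}(x)=\adj_{x}(\mathfrak{g})$, this is exactly transversality of $S(\xi,h,\eta)$ and $\mathcal{O}(x)$ at $x$, and it also shows that $S(\xi,h,\eta)\cap\mathcal{O}$ is $0$-dimensional, hence discrete, for every regular orbit $\mathcal{O}$.

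To upgrade ``discrete'' to ``a single point'', I would bring in the adjoint quotient $\chi\colon\mathfrak{g}\to\mathfrak{g}/\!\!/G\cong\mathbb{C}^{r}$, $r:=\operatorname{rk}(G)$, associated to a system of homogeneous generators $p_{1},\dots,p_{r}$ of $\mathbb{C}[\mathfrak{g}]^{G}$ (Chevalley restriction), together with two further results of Kostant: (i) $\chi|_{S(\xi,h,\eta)}\colon S(\xi,h,\eta)\to\mathbb{C}^{r}$ is an isomorphism of varieties; and (ii) for each $c\in\mathbb{C}^{r}$ the fibre $\chi^{-1}(c)$ contains a unique regular orbit $\mathcal{O}_{c}$, and $\chi^{-1}(c)\cap\mathfrak{g}_{\text{reg}}=\mathcal{O}_{c}$. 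Granting these, a regular orbit $\mathcal{O}$ has $\chi(\mathcal{O})=\{c\}$ for a single $c$, so $\mathcal{O}\subseteq\chi^{-1}(c)\cap\mathfrak{g}_{\text{reg}}=\mathcal{O}_{c}$ forces $\mathcal{O}=\mathcal{O}_{c}$; then $S(\xi,h,\eta)\cap\mathcal{O}=S(\xi,h,\eta)\cap\chi^{-1}(c)$ (using $S(\xi,h,\eta)\subseteq\mathfrak{g}_{\text{reg}}$), which is a single point because $\chi|_{S(\xi,h,\eta)}$ is bijective and nonempty because it is surjective.

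The main obstacle is (i), which is the heart of Kostant's theorem. I would prove it by observing that $\chi|_{S(\xi,h,\eta)}$ is equivariant for $\beta$ on the source (contracting to $\xi$) and for the linear $\mathbb{C}^{*}$-action on $\mathbb{C}^{r}$ with positive weights $2\deg p_{i}$ (contracting to $0$), and that it sends $\xi\mapsto 0$ because $\xi$ is nilpotent. For an equivariant morphism between affine spaces with such contracting actions, being étale at the fixed point implies being an isomorphism — a graded Nakayama argument shows the comorphism is surjective, and a surjection between $r$-dimensional polynomial rings is an isomorphism. Étaleness at $\xi$ reduces to checking that $d\chi_{\xi}\colon\mathfrak{g}\to\mathbb{C}^{r}$ has rank $r$ — equivalently that the $p_{i}$ are independent at the regular element $\xi$ — with kernel containing, and hence by dimension equal to, $\adj_{\xi}(\mathfrak{g})=T_{\xi}\mathcal{O}_{\text{reg}}$; combined with $\mathfrak{g}=Z_{\mathfrak{g}}(\eta)\oplus\adj_{\xi}(\mathfrak{g})$ and $T_{\xi}S(\xi,h,\eta)=Z_{\mathfrak{g}}(\eta)$, this makes $d(\chi|_{S(\xi,h,\eta)})_{\xi}$ an isomorphism. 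Fact (ii) is also due to Kostant, and I would simply cite it.
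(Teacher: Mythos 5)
Your argument is correct in outline, but it is worth being clear about what it is being compared to: the paper offers no proof of this statement at all --- Theorem \ref{Theorem: Kostant's theorem} is quoted directly from Kostant (\cite[Thm.~8, Lem.~13]{KostantLie}) --- so yours is a reconstruction rather than a parallel of an argument in the text. The parts you actually prove are sound and attractive. The contraction $\beta(t,x)=t^{2}\Adj_{\rho(t)^{-1}}(x)$ does preserve $S(\xi,h,\eta)$ and contract it to $\xi$ (the gradedness of $Z_{\mathfrak{g}}(\eta)$ and the weights $2-j\ge 2$ check out), and the ``flow into a $\beta$-invariant open set containing $\xi$'' trick gives clean, self-contained proofs both of $S(\xi,h,\eta)\subseteq\mathfrak{g}_{\text{reg}}$ and, via the $\beta$-invariant set $P=\{x:Z_{\mathfrak{g}}(\eta)+\adj_x(\mathfrak{g})=\mathfrak{g}\}$ together with the dimension count $\dim Z_{\mathfrak{g}}(\eta)=\operatorname{rk}(G)=\dim Z_{\mathfrak{g}}(x)$, of transversality; this is the standard Kazhdan-grading argument in the style of modern treatments of Slodowy slices, and it buys more than the bare citation does (it even shows the sum is direct, so orbit--slice intersections are discrete). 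The graded-Nakayama step is also fine: equivariance makes the comorphism of $\chi|_{S(\xi,h,\eta)}$ graded for positive gradings, surjectivity on cotangent spaces at the fixed points then gives surjectivity of the comorphism, and a surjection between $r$-dimensional polynomial rings is an isomorphism.

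The one place you should be more careful is the ``single point'' statement, which in your write-up is not self-contained but is deduced from two further theorems of the very same Kostant paper: (ii) that each fibre of the adjoint quotient contains a unique regular orbit equal to its regular locus, which you cite, and the rank-$r$ claim for $d\chi_{\xi}$, i.e.\ linear independence of the $dp_i$ at a regular (here regular nilpotent) element. The latter is not a routine ``check'': it is exactly Kostant's Theorem 9, the fact this paper itself invokes later as \eqref{Equation: Functional independence}, and it is one of the deep points of the theory, so it must be cited (or proved) explicitly rather than folded into the étaleness computation. As stated, your proof is therefore a derivation of Kostant's Theorem 8/Lemma 13 from his Theorem 9 plus the fibre theorem; that is legitimate and non-circular provided you note that those inputs admit proofs independent of the slice theorem (e.g.\ the fibre statement via Jordan decomposition and connectedness of centralizers in the simply-connected $G$), but the dependence should be acknowledged up front rather than left implicit.
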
    
	
	\subsection{The hyperk\"{a}hler varieties of interest}\label{Subsection: The hyperkahler varieties of interest}
	Recall that the cotangent bundle $T^*G$ carries a distinguished holomorphic symplectic form, to be denoted $\omega_{T^*G}$. It will be convenient to identify $T^*G$ with $G\times\mathfrak{g}$, using an isomorphism between the two to transport the holomorphic symplectic structure on the former to one on the latter. We define this isomorphism as follows: 
	
	\begin{eqnarray}\label{Equation: Second trivialization}\Psi\;:\;G\times\mathfrak{g} & \xrightarrow{\cong} & T^*G\\ (g,x) & \mapsto & (g,x^{\vee}\circ (d_eR_g)^{-1}),\nonumber\end{eqnarray}
	where $R_g:G\rightarrow G$ denotes right multiplication by a fixed $g\in G$ and $d_eR_g:\mathfrak{g}\rightarrow T_gG$ is the differential of $R_g$ at $e\in G$. Note that \eqref{Equation: Second trivialization} is simply the result of composing the right trivialization $G\times\mathfrak{g}^*\rightarrow T^*G$ with the isomorphism $G\times\mathfrak{g}\rightarrow G\times\mathfrak{g}^*$, $(g,x)\mapsto (g,x^{\vee})$. 
	
	Let $\omega:=\Psi^*(\omega_{T^*G})$ denote the induced holomorphic symplectic form on $G\times\mathfrak{g}$. Given $(g,x)\in G\times\mathfrak{g}$, one can verify that $\omega$ restricts to the following bilinear form on $T_{(g,x)}(G\times\mathfrak{g})=T_gG\oplus\mathfrak{g}$:
	\begin{equation}\label{Equation: Symplectic form in right trivialization}
	\omega_{(g,x)}\big((d_eR_g(y_1),z_1), (d_eR_g(y_2),z_2)\big)=\langle y_1,z_2\rangle-\langle y_2,z_1\rangle-\langle x,[y_1,y_2]\rangle,
	\end{equation}
	where $y_1,y_2,z_1,z_2\in\mathfrak{g}$ (cf. \cite[Sect. 5, Eqn. (14R)]{Marsden}).
	
	Now let $(\xi,h,\eta)$ be an $\mathfrak{sl}_2(\mathbb{C})$-triple with associated Slodowy slice $S(\xi,h,\eta)\subseteq\mathfrak{g}$. One has an inclusion of varieties $G\times S(\xi,h,\eta)\subseteq G\times\mathfrak{g}$, by virtue of which the former carries some interesting geometric structures. Indeed, the following is one of Bielawski's results (see \cite{Bielawski}), phrased in terms of $G\times\mathfrak{g}$ instead of $T^*G$
	
	\begin{theorem}[Bielawski]\label{Theorem: Bielawski's theorem}
		If $(\xi,h,\eta)$ is an $\mathfrak{sl}_2(\mathbb{C})$-triple, then $G\times S(\xi,h,\eta)$ has a canonical hyperk\"{a}hler structure whose underlying holomorphic symplectic form is obtained by restricting that of $G\times\mathfrak{g}$ to $G\times S(\xi,h,\eta)$.
	\end{theorem}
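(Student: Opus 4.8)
The statement is due to Bielawski \cite{Bielawski}; I sketch the architecture of the argument and flag the one step that carries the real content. Write $G$ as the complexification of a maximal compact subgroup $K$, put $\mathfrak{k}:=\Lie(K)$, and work on the half-open interval $(0,1]$. Consider Nahm's equations for quadruples $(T_0,T_1,T_2,T_3)$ of $\mathfrak{k}$-valued functions on $(0,1]$ that have a simple pole at $t=0$ with residue prescribed by the homomorphism $\mathfrak{sl}_2(\mathbb{C})\to\mathfrak{g}$ attached to $(\xi,h,\eta)$, and that satisfy a framing (Dirichlet-type) condition at $t=1$. Following Kronheimer's realization of $T^*G$ as such a moduli space, and Bielawski's extension to the singular case, the moduli space $\mathcal{M}$ of such solutions, modulo the gauge group $\mathcal{G}_0$ that is trivial at $t=1$, is the infinite-dimensional hyperk\"ahler quotient of the flat quaternionic affine space of quadruples (with its $L^2$ metric) by $\mathcal{G}_0$: the three components of the hyperk\"ahler moment map are the three Nahm equations, and the residue prescription at $t=0$ fixes the level. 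Hence $\mathcal{M}$ is a smooth, finite-dimensional hyperk\"ahler manifold whose $2$-sphere of complex structures is inherited from the rotational $\SO(3)$-symmetry of Nahm's equations. (Non-emptiness, smoothness, completeness, and the regularity of solutions near the pole are exactly the analytic facts established in \cite{Bielawski}, and I would simply cite them.)

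Next, I identify $\mathcal{M}$ with $G\times S(\xi,h,\eta)$ in one distinguished complex structure $I$. Writing $\alpha:=T_0+iT_1$ and $\beta:=T_2+iT_3$, Nahm's equations become a real equation together with the complex equation $\dot{\beta}+[\alpha,\beta]=0$, and the Kempf--Ness/Hitchin--Kronheimer principle identifies $(\mathcal{M},I)$ with the space of complex solutions of $\dot{\beta}+[\alpha,\beta]=0$ carrying the prescribed pole, modulo the complexified gauge group $\mathcal{G}_0^{\mathbb{C}}$. Using the complex gauge freedom one normalizes $\alpha$ along $(0,1]$; then $\beta$ becomes a constant $x\in\mathfrak{g}$, and the residual freedom is a copy of $G$ acting on $x$ by the adjoint action. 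The pole condition at $t=0$ forces, after this normalization, that $x$ be $G$-conjugate into the transverse slice $S(\xi,h,\eta)$; by the transversality of $S(\xi,h,\eta)$ to $\mathcal{O}(\xi)$ (and, when $(\xi,h,\eta)$ is regular, by Theorem \ref{Theorem: Kostant's theorem}) this pins down $x$ as a point of $S(\xi,h,\eta)$ together with the conjugating element $g\in G$. The upshot is a biholomorphism $(\mathcal{M},I)\xrightarrow{\cong}G\times S(\xi,h,\eta)$ onto the closed complex submanifold $G\times S(\xi,h,\eta)\subseteq G\times\mathfrak{g}$; note that this submanifold has even dimension $2\dim G-\dim\mathcal{O}(\xi)$, so that its carrying a holomorphic symplectic form is at least dimensionally consistent.

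It remains to check that the holomorphic symplectic form of $(\mathcal{M},I)$ transports, under this biholomorphism, to the restriction of $\omega$. On the space of quadruples the holomorphic symplectic form in complex structure $I$ is represented, up to a universal constant, by $\int_0^1\langle\delta\alpha\wedge\delta\beta\rangle\,dt$, and it descends to $\mathcal{M}$. Evaluating it on two tangent vectors represented by linearized complex solutions in a gauge slice, one integrates by parts in $t$ and uses the linearized equation $\dot{\delta\beta}+[\alpha,\delta\beta]+[\delta\alpha,\beta]=0$ to collapse the bulk term, leaving boundary contributions at $t=0$ and $t=1$. The contribution at $t=0$ vanishes because the polar data is frozen, while the contribution at $t=1$, once one unwinds the identification of the $t=1$ framing with $g$ and of the constant value of $\beta$ with $x\in S(\xi,h,\eta)$, and recalls that the trivialization $\Psi$ of \eqref{Equation: Second trivialization} is built from the \emph{right} trivialization of $T^*G$, is exactly the right-hand side of \eqref{Equation: Symplectic form in right trivialization} restricted to $T\big(G\times S(\xi,h,\eta)\big)$; in particular this also exhibits $\omega|_{G\times S(\xi,h,\eta)}$ as non-degenerate. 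This final matching is the step I expect to be the main obstacle: it is not conceptually deep, but it requires keeping three identifications straight at once — the $\SO(3)$-to-$I$ reduction, the complex-gauge normalization producing the pair $(g,x)$, and the right-trivialization $\Psi$ — together with careful bookkeeping of signs and of the boundary terms at both ends of the interval.
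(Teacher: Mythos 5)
You should first note that the paper itself contains no proof of this theorem: it is imported directly from \cite{Bielawski} (restated via the trivialization $T^*G\cong G\times\mathfrak{g}$), so there is no internal argument to compare yours against. Your outline is indeed the construction underlying the cited result — Kronheimer's realization of $T^*G$ as a moduli space of solutions of Nahm's equations on an interval, Bielawski's extension allowing a simple pole at $t=0$ with residues prescribed by the triple, the hyperk\"ahler structure obtained as an infinite-dimensional hyperk\"ahler quotient, and the identification with $G\times S(\xi,h,\eta)$ performed in one complex structure, with the holomorphic symplectic form computed by the boundary-term argument. As a proof, however, it is a roadmap: every substantive analytic point (smoothness and completeness of the moduli space, regularity near the pole, the validity of the Kempf--Ness-type identification, non-degeneracy of the quotient metric) is deferred to the reference.

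There is also one step that, as written, is wrong. You cannot ``normalize $\alpha$ so that $\beta$ becomes a constant $x\in\mathfrak{g}$'': $\beta$ has a simple pole at $t=0$ and complex gauge transformations in $\mathcal{G}_0^{\mathbb{C}}$ are regular up to the endpoint, so the pole cannot be gauged away. The correct statement (and the heart of Bielawski's identification) is a normal-form result: every solution of $\dot{\beta}+[\alpha,\beta]=0$ with the prescribed pole is equivalent, by a unique complex gauge transformation trivial at $t=1$, to a model solution whose $\beta$ behaves near $t=0$ like $\tfrac{1}{t}\Adj_{\exp(\frac{1}{2}(\log t)h)}(\xi+x)$ with $x\in Z_{\mathfrak{g}}(\eta)$, the group coordinate $g\in G$ being recovered from the framing data at $t=1$. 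The Slodowy slice thus enters through the $\adj_h$-grading and the decomposition $\mathfrak{g}=[\mathfrak{g},\xi]\oplus Z_{\mathfrak{g}}(\eta)$, which holds for \emph{every} $\mathfrak{sl}_2(\mathbb{C})$-triple — not through Theorem \ref{Theorem: Kostant's theorem}, which concerns regular triples only, whereas the statement being proved is for arbitrary triples. With that identification corrected, your integration-by-parts computation matching the quotient symplectic form with $\omega\vert_{G\times S(\xi,h,\eta)}$ as in \eqref{Equation: Symplectic form in right trivialization} is the right strategy, but the middle step of your argument as stated does not go through.
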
     
	
	Now let $G$ act on $G\times S(\xi,h,\eta)$ according to
	\begin{equation}\label{Equation: Hamiltonian action} h\cdot (g,x):=(gh^{-1},x),\quad g,h\in G,\text{ } x\in S(\xi,h,\eta).\end{equation} This action enjoys a few properties that we record here for future reference.
	
	\begin{proposition}\label{Proposition: Hamiltonian action}
		The action \eqref{Equation: Hamiltonian action} is Hamiltonian with respect to the holomorphic symplectic form on $G\times S(\xi,h,\eta)$, and
		\begin{eqnarray}\label{Equation: Moment map}\mu\;:\;G\times S(\xi,h,\eta) & \rightarrow & \mathfrak{g}^*\\ (g,x) & \mapsto & -\Adj_{g^{-1}}^*(x^{\vee})\nonumber\end{eqnarray}
		is a moment map.
	\end{proposition}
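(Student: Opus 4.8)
The plan is to recognize $\mu$ as the canonical moment map for a cotangent lift and to verify the defining identity by a direct calculation with the explicit symplectic form \eqref{Equation: Symplectic form in right trivialization}, after which passage from $G\times\mathfrak{g}$ back to $G\times S(\xi,h,\eta)$ is immediate. First I would observe that \eqref{Equation: Hamiltonian action} is the restriction to $G\times S(\xi,h,\eta)$ of the $G$-action $h\cdot(g,x)=(gh^{-1},x)$ on $G\times\mathfrak{g}$, which under the trivialization $\Psi$ of \eqref{Equation: Second trivialization} is precisely the cotangent lift of the action of $G$ on itself by right translations $g\mapsto gh^{-1}$. Since Theorem \ref{Theorem: Bielawski's theorem} identifies the holomorphic symplectic form on $G\times S(\xi,h,\eta)$ with the restriction of $\omega$, and since $G\times S(\xi,h,\eta)$ is manifestly invariant under the action, it suffices to establish the moment map identity on all of $(G\times\mathfrak{g},\omega)$ and then pull it back along the inclusion $G\times S(\xi,h,\eta)\hookrightarrow G\times\mathfrak{g}$: the fundamental vector fields and the functions $\langle\mu,\zeta\rangle$ restrict to their counterparts on the submanifold, and interior product commutes with pullback for vector fields tangent to it.

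Next, fixing $\zeta\in\mathfrak{g}$ with fundamental vector field $\zeta_{G\times\mathfrak{g}}$, I would differentiate $t\mapsto(g\exp(-t\zeta),x)$ at $t=0$ and use the standard relation $d_eL_g=d_eR_g\circ\Adj_g$ (where $L_g$ is left multiplication by $g$) to get $\zeta_{G\times\mathfrak{g}}(g,x)=\bigl(-d_eR_g(\Adj_g\zeta),\,0\bigr)$. Identifying $\mathfrak{g}^{*}$ with $\mathfrak{g}$ via the Killing form and taking $\Adj^{*}_{a}=(\Adj_{a^{-1}})^{*}$ for the coadjoint representation, the component function $(g,x)\mapsto\langle\mu(g,x),\zeta\rangle$ works out to $-\langle x,\Adj_g\zeta\rangle$; this also equals the Killing pairing of $\zeta$ with $\Phi(g,x)=-\Adj_{g^{-1}}(x)$, so that $\mu$ is $\Phi$ read through \eqref{Equation: Isomorphism of representations}. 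Evaluating the differential of this function at $(g,x)$ along a tangent vector $(d_eR_g(y),z)\in T_gG\oplus\mathfrak{g}$ — conveniently realized by the curve $t\mapsto(\exp(ty)g,\,x+tz)$ together with $\Adj_{\exp(ty)g}=\Adj_{\exp(ty)}\circ\Adj_g$ — gives $-\langle z,\Adj_g\zeta\rangle+\langle x,[\Adj_g\zeta,y]\rangle$. Feeding $(y_1,z_1)=(-\Adj_g\zeta,0)$ and $(y_2,z_2)=(y,z)$ into \eqref{Equation: Symplectic form in right trivialization} gives $\omega_{(g,x)}\bigl(\zeta_{G\times\mathfrak{g}}(g,x),(d_eR_g(y),z)\bigr)=-\langle\Adj_g\zeta,z\rangle+\langle x,[\Adj_g\zeta,y]\rangle$, and the two expressions agree by symmetry of the Killing form. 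Since $(y,z)$ is arbitrary, $\iota_{\zeta_{G\times\mathfrak{g}}}\omega=d\langle\mu,\zeta\rangle$ for every $\zeta$.

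Lastly I would record equivariance: since $\Adj^{*}$ is a homomorphism, $\mu(gh^{-1},x)=-\Adj^{*}_{hg^{-1}}(x^{\vee})=\Adj^{*}_{h}\bigl(\mu(g,x)\bigr)$, so $\mu$ intertwines \eqref{Equation: Hamiltonian action} with the coadjoint action on $\mathfrak{g}^{*}$, and this survives restriction. Pulling the identity of the preceding paragraph back to the invariant symplectic submanifold $G\times S(\xi,h,\eta)$ then shows that \eqref{Equation: Moment map} restricts to an equivariant moment map there, whence \eqref{Equation: Hamiltonian action} is Hamiltonian. I expect the only real obstacle to be bookkeeping of conventions: the sign of the fundamental vector field of a left action, the passage between $d_eL_g$ and $d_eR_g$, and the coadjoint-representation convention implicit in the symbol $\Adj^{*}_g$ (in the opposite convention the same moment map is written $(g,x)\mapsto-\Adj^{*}_{g}(x^{\vee})$). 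Once these are pinned down, the verification is the routine calculation sketched above.
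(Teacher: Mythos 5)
Your proof is correct, and it follows the same overall skeleton as the paper's --- extend the action to $G\times\mathfrak{g}$, establish the moment map there, and restrict to the invariant symplectic subvariety $G\times S(\xi,h,\eta)$ --- but the key step is carried out differently. The paper never touches the identity $\iota_{\zeta_{X}}\omega=d\langle\mu,\zeta\rangle$ directly: it checks that $\Psi$ is a $G$-equivariant symplectomorphism onto $T^*G$ with its cotangent-lifted action, invokes the standard fact that cotangent lifts are Hamiltonian with canonical moment map $\Theta(g,\alpha)(y)=\alpha(X_y(g))$, and then computes $\Theta\circ\Psi=\tilde\mu$ pointwise. You instead note the cotangent-lift interpretation only in passing and verify the moment map condition by hand against the explicit formula \eqref{Equation: Symplectic form in right trivialization}: your fundamental vector field $\bigl(-d_eR_g(\Adj_g\zeta),0\bigr)$, the differential $-\langle z,\Adj_g\zeta\rangle+\langle x,[\Adj_g\zeta,y]\rangle$ of $\langle\mu,\zeta\rangle$, and the contraction of $\omega$ all check out, and the restriction argument (fundamental vector fields tangent to the invariant submanifold, form restricted per Theorem \ref{Theorem: Bielawski's theorem}) is the same as the paper's. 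What your route buys is self-containedness: it needs only the formula \eqref{Equation: Symplectic form in right trivialization} and no external input on canonical moment maps for lifted actions, and it makes the sign and convention bookkeeping fully explicit. The paper's route buys brevity of symplectic computation at the cost of citing the cotangent-lift moment map and verifying equivariance of $\Psi$. Your closing observation on equivariance of $\mu$ is not needed for the proposition as stated, but it is a useful bonus since the paper later relies on equivariance in Proposition \ref{Proposition: Poisson morphism}.
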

	
	\begin{proof}
		Consider the following extension of \eqref{Equation: Hamiltonian action} to an action on $G\times\mathfrak{g}$:
		\begin{equation}\label{Equation: Extended Hamiltonian action} h\cdot (g,x):=(gh^{-1},x),\quad g,h\in G,\text{ } x\in\mathfrak{g}.\end{equation}
		We will show \eqref{Equation: Extended Hamiltonian action} to be a Hamiltonian action with respect to the symplectic form \eqref{Equation: Symplectic form in right trivialization}, and that 
		\begin{eqnarray}\tilde{\mu}\;:\;G\times\mathfrak{g} & \rightarrow & \mathfrak{g}^*\nonumber\\ (g,x) & \mapsto & -\Adj_{g^{-1}}^*(x^{\vee})\nonumber\end{eqnarray}
		is a moment map. Since $G\times S(\xi,h,\eta)$ is a $G$-invariant symplectic subvariety of $G\times\mathfrak{g}$, it will follow that \eqref{Equation: Hamiltonian action} is a Hamiltonian action with moment map $\tilde{\mu}\vert_{G\times S(\xi,h,\eta)}=\mu$.
		
		Now consider the following action of $G$ on $T^*G$:
		\begin{equation}\label{Equation: Induced action on cotangent bundle}h\cdot(g,\alpha):=(gh^{-1},\alpha\circ(d_gR_{h^{-1}})^{-1}),\quad g,h\in G,\text{ }\alpha\in T^*_gG.
		\end{equation}
		This is the action on $T^*G$ naturally induced by an action of $G$ on itself, namely \begin{equation}\label{Equation: Action on G} h\cdot g:=gh^{-1},\quad g,h\in G.\end{equation} As such, \eqref{Equation: Induced action on cotangent bundle} is necessarily a Hamiltonian action with the following moment map: $$\Theta:T^*G\rightarrow\mathfrak{g}^*,\quad \Theta(g,\alpha)(y)=\alpha(X_{y}(g)),\quad g\in G,\text{ }\alpha\in T^*_gG,\text{ }y\in\mathfrak{g}$$
		(see \cite[Example 4.5.4]{Ortega}), where $X_y$ is the fundamental vector field on $G$ for the action \eqref{Equation: Action on G} and the point $y\in\mathfrak{g}$. One can verify that $X_y(g)=-d_eL_g(y)\in T_gG$ for all $y\in\mathfrak{g}$ and $g\in G$.
		
		Recall the isomorphism $\Psi:G\times\mathfrak{g}\rightarrow T^*G$ from \eqref{Equation: Second trivialization}. It is not difficult to verify that
		$\Psi$ is $G$-equivariant with respect to the actions \eqref{Equation: Extended Hamiltonian action} and \eqref{Equation: Induced action on cotangent bundle}. Moreover, as $\omega=\Psi^*(\omega_{T^*G})$, it follows that $\Psi$ is a $G$-equivariant symplectomorphism. Since \eqref{Equation: Induced action on cotangent bundle} is a Hamiltonian action with moment map $\Theta$, this implies that \eqref{Equation: Extended Hamiltonian action} is also a Hamiltonian action with moment map $\Theta\circ\Psi$. It then remains only to prove that $\tilde{\mu}=\Theta\circ\Psi$.
		
		Given $g\in G$ and $x,y\in\mathfrak{g}$, we have
		\begin{align*}(\Theta\circ\Psi)(g,x)(y) & = \Theta(g,x^{\vee}\circ (d_eR_g)^{-1})(y)\\
		& = (x^{\vee}\circ (d_eR_g)^{-1})(X_y(g))\\
		& = \langle x,(d_eR_g)^{-1}(X_y(g))\rangle\\
		& = \langle x,-(d_eR_g)^{-1}(d_eL_g(y))\rangle\\
		& = \langle x,-\Adj_g(y)\rangle\\
		& = \langle -\Adj_{g^{-1}}(x),y\rangle\\
		& = (-\Adj_{g^{-1}}(x))^{\vee}(y).
		\end{align*}
		It follows that $(\Theta\circ\Psi)(g,x)=(-\Adj_{g^{-1}}(x))^{\vee}$, and in turn the right-hand side becomes$$-\Adj_{g^{-1}}^*(x^{\vee})=\tilde{\mu}(g,x)$$ when one recalls that \eqref{Equation: Isomorphism of representations} intertwines the adjoint and coadjoint representations. 
	\end{proof}     
	
	\section{A canonical abstract integrable system}\label{Section: Main results}
	
	\subsection{The map $G\times S_{\text{reg}}\rightarrow\mathfrak{g}$ and its properties}\label{Subsection: The map}
	For the duration of this article, $(\xi,h,\eta)$ will be a fixed regular $\mathfrak{sl}_2(\mathbb{C})$-triple and $S_{\text{reg}}:=S(\xi,h,\eta)$ shall denote its associated Slodowy slice. 
	One may then consider the holomorphic map
	\begin{eqnarray}\label{Equation: Definition of phi}\Phi\;:\;G\times S_{\text{reg}} & \rightarrow & \mathfrak{g}\\ (g,x) & \mapsto & -\Adj_{g^{-1}}(x).\nonumber\end{eqnarray}
	
	A preliminary observation is that $\Phi$ is $G$-equivariant for the adjoint action on $\mathfrak{g}$ and the following $G$-action on $G\times S_{\text{reg}}$:
	\begin{equation}\label{Equation: Action on hyperkahler variety}h\cdot(g,x):=(gh^{-1},x),\quad g,h\in G,\text{  }x\in S_{\text{reg}}.
	\end{equation} 
	However, one can say considerably more about $\Phi$.
	\begin{proposition}\label{Proposition: Surjective submersion}
		The map $\Phi$ is a holomorphic submersion and its image is $\mathfrak{g}_{\emph{reg}}$.
	\end{proposition}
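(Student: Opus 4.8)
The plan is to establish the two assertions --- surjectivity onto $\mathfrak{g}_{\text{reg}}$ and the submersion property --- more or less in parallel, extracting both from Kostant's theorem together with an explicit computation of the differential of $\Phi$. First I would pin down the image. For $(g,x) \in G\times S_{\text{reg}}$ we have $\Phi(g,x) = -\Adj_{g^{-1}}(x)$; since $x \in S_{\text{reg}} \subseteq \mathfrak{g}_{\text{reg}}$ by Theorem \ref{Theorem: Kostant's theorem} and $\mathfrak{g}_{\text{reg}}$ is $G$-invariant, the element $\Adj_{g^{-1}}(x)$ is regular, hence so is its negative (note $-1$ acts on $\mathfrak{g}$ as $\Adj$ of a central element, or simply observe $Z_{\mathfrak g}(-y)=Z_{\mathfrak g}(y)$). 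Thus $\im\Phi \subseteq \mathfrak{g}_{\text{reg}}$. Conversely, given $y \in \mathfrak{g}_{\text{reg}}$, the adjoint orbit $\mathcal{O}(-y)$ is a regular orbit, so by Theorem \ref{Theorem: Kostant's theorem} it meets $S_{\text{reg}}$ in a (single) point $x = \Adj_{g^{-1}}(-y)$ for some $g \in G$; then $\Phi(g,x) = -\Adj_{g^{-1}}^{-1}(x)$... more carefully, write $x = \Adj_{g^{-1}}(-y)$, so $-\Adj_{g}(x) = y$ — here I should be careful with the direction of conjugation, but the upshot is that some choice of group element realizes $y$ as $\Phi(g,x)$, giving surjectivity onto $\mathfrak{g}_{\text{reg}}$.

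Next I would compute $d\Phi$ at a point $(g,x)$. Using the right-trivialized tangent space $T_{(g,x)}(G\times S_{\text{reg}}) = d_eR_g(\mathfrak{g}) \oplus Z_{\mathfrak g}(\eta)$, a tangent vector is a pair $(d_eR_g(y), z)$ with $y \in \mathfrak{g}$, $z \in Z_{\mathfrak g}(\eta)$. Differentiating $\Phi(g,x) = -\Adj_{g^{-1}}(x)$ in the $g$-direction contributes $\Adj_{g^{-1}}([y,x])$ (up to sign, via the formula $\tfrac{d}{dt}\Adj_{(\exp(ty)g)^{-1}}(x) = -\Adj_{g^{-1}}(\adj_y(x))$), and differentiating in the $x$-direction contributes $-\Adj_{g^{-1}}(z)$. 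So, modulo the invertible map $\Adj_{g^{-1}}$, the image of $d\Phi_{(g,x)}$ is $[\mathfrak{g},x] + Z_{\mathfrak g}(\eta) = \im(\adj_x) + Z_{\mathfrak g}(\eta)$ (translated by $x$, but affine translation does not affect the tangent picture). To conclude $\Phi$ is a submersion it suffices to show this sum is all of $\mathfrak{g}$. Now $\im(\adj_x) = Z_{\mathfrak g}(x)^{\perp}$ with respect to the Killing form, and $\dim Z_{\mathfrak g}(x) = \text{rk}(G) = \dim Z_{\mathfrak g}(\eta)$ since both $x$ and $\eta$ are regular. A dimension count then reduces the claim to showing the sum is direct, i.e. $Z_{\mathfrak g}(\eta) \cap \im(\adj_x) = 0$; equivalently, by Killing-form duality, $Z_{\mathfrak g}(\eta)^{\perp} + Z_{\mathfrak g}(x) = \mathfrak g$, i.e. $\im(\adj_\eta) + Z_{\mathfrak g}(x) = \mathfrak g$. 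But this last statement is precisely the transversality of $S_{\text{reg}} = \xi + Z_{\mathfrak g}(\eta)$ to the orbit $\mathcal{O}(x)$ through the point $x$, whose tangent space at $x$ is $\im(\adj_x)$ --- wait, I need transversality at $x$ of $S_{\text{reg}}$ and $\mathcal{O}(x)$, which is exactly the second sentence of Theorem \ref{Theorem: Kostant's theorem}: $S_{\text{reg}}$ is transverse to every regular adjoint orbit. Since the tangent space to $S_{\text{reg}}$ is $Z_{\mathfrak g}(\eta)$ and the tangent space to $\mathcal{O}(x)$ at $x$ is $\im(\adj_x)$, transversality says $Z_{\mathfrak g}(\eta) + \im(\adj_x) = \mathfrak g$, which is exactly what we want (the directness then follows from the equality of dimensions, though we do not even need it).

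The main obstacle, and the step deserving the most care, is the correct bookkeeping in the differential computation --- getting the conjugation directions, the sign from the $g^{-1}$, and the identification of $\im(\adj_x)$ with $\ker(\adj_x)^\perp$ all consistent --- together with the clean reduction of the submersivity condition to the transversality clause of Kostant's theorem. Once the differential is written as $\Adj_{g^{-1}}\circ(\text{sum map})$ with the sum map having image $\im(\adj_x) + Z_{\mathfrak g}(\eta)$, everything follows from Theorem \ref{Theorem: Kostant's theorem}: the transversality of $S_{\text{reg}}$ to regular orbits gives the submersion property, and the fact that every regular orbit meets $S_{\text{reg}}$ gives surjectivity onto $\mathfrak g_{\text{reg}}$, while $S_{\text{reg}} \subseteq \mathfrak g_{\text{reg}}$ together with $G$-invariance of $\mathfrak g_{\text{reg}}$ gives the reverse inclusion of images. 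I would also remark that since $\Phi$ is a submersion onto an open subset of $\mathfrak g$, its image is automatically open, consistent with $\mathfrak g_{\text{reg}}$ being open in $\mathfrak g$.
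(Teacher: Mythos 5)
Your proposal is correct and follows essentially the same route as the paper: both directions of the image computation rest on Kostant's theorem ($S_{\text{reg}}\subseteq\mathfrak{g}_{\text{reg}}$ plus every regular orbit meeting the slice), and submersivity is obtained by computing the differential and identifying its image, modulo the invertible factor $\Adj_{g^{-1}}$, with $\im(\adj_x)+Z_{\mathfrak{g}}(\eta)=\mathfrak{g}$ via the transversality clause of Kostant's theorem. The only cosmetic differences are that the paper first uses $G$-equivariance to reduce to the point $(e,x)$ before differentiating, whereas you compute at a general $(g,x)$ in the right trivialization, and your brief Killing-form duality detour is unnecessary, as you yourself note.
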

	
	\begin{proof}
		Theorem \ref{Theorem: Kostant's theorem} gives the inclusion $S_{\text{reg}}\subseteq\mathfrak{g}_{\text{reg}}$, and the latter set is invariant under both the adjoint $G$-action and multiplication by $-1$. From this last sentence, it follows that $-\Adj_{g^{-1}}(x)\in\mathfrak{g}_{\text{reg}}$ for all $g\in G$ and $x\in S_{\text{reg}}$, i.e. $\Phi(G\times S_{\text{reg}})\subseteq \mathfrak{g}_{\text{reg}}$. For the opposite inclusion, suppose that $x\in\mathfrak{g}_{\text{reg}}$. It follows that $-x$ belongs to a regular adjoint orbit, which by Theorem \ref{Theorem: Kostant's theorem} must intersect $S_{\text{reg}}$ at a point $y$. Note that $-x=\Adj_{g^{-1}}(y)$ for some $g\in G$, so that $\Phi(g,y)=x$. We conclude that $\mathfrak{g}_{\text{reg}}\subseteq\Phi(G\times S_{\text{reg}})$, completing our proof that $\mathfrak{g}_{\text{reg}}$ is the image of $\Phi$.  
		
		To show that $\Phi$ is submersive is to show that the differential of $\Phi$ at $(g,x)$, $d_{(g,x)}\Phi$, is a surjective map of tangent spaces for all $(g,x)\in G\times S_{\text{reg}}$. However, since $\Phi$ is $G$-equivariant in the sense discussed before this proposition, it will suffice prove that $$d_{(e,x)}\Phi:T_{(e,x)}(G\times S_{\text{reg}})\rightarrow T_{-x}(\mathfrak{g})=\mathfrak{g}$$ is surjective for all $x\in S_{\text{reg}}$. To this end, note that $T_{(e,x)}(G\times S_{\text{reg}})$ is canonically identified with $\mathfrak{g}\oplus Z_{\mathfrak{g}}(\eta)$. Also, given $(y,z)\in\mathfrak{g}\oplus Z_{\mathfrak{g}}(\eta)$, observe that $t\mapsto(\exp(ty),x+tz)$ is a curve in $G\times S_{\text{reg}}$ having tangent vector $(y,z)$ at $t=0$. Using the previous two statements, we may present $d_{(e,x)}\Phi$ as a map
		$$d_{(e,x)}\Phi:\mathfrak{g}\oplus Z_{\mathfrak{g}}(\eta)\rightarrow\mathfrak{g}$$ whose value at the tangent vector $(y, z)\in\mathfrak{g}\oplus Z_{\mathfrak{g}}(\eta)$ is calculated as follows:
		\begin{align*}
		(d_{(e,x)}\Phi)(y,z) & =\frac{d}{dt}\bigg\vert_{t=0}\bigg(\Phi(\exp(ty),x+tz)\bigg)\\
		&=\frac{d}{dt}\bigg\vert_{t=0}\bigg(-\Adj_{\exp(-ty)}(x+tz)\bigg)\\
		&=\frac{d}{dt}\bigg\vert_{t=0}\bigg(-\exp(\adj_{-ty})(x+tz)\bigg)\\
		&=[y,x]-z.
		\end{align*}
		Noting that $T_x\mathcal{O}(x)=\{[y,x]:y\in\mathfrak{g}\}$, this calculation shows the image of $d_{(e,x)}\Phi$ to be precisely $T_x\mathcal{O}(x)+Z_{\mathfrak{g}}(\eta)$. Since $\mathcal{O}(x)$ and $S_{\text{reg}}$ are transverse (by Theorem \ref{Theorem: Kostant's theorem}), this image is necessarily all of $\mathfrak{g}$. We conclude that $d_{(e,x)}\Phi$ is surjective for all $x\in S_{\text{reg}}$, as required.
	\end{proof}
	
	For future reference, we record the following immediate consequence of Proposition \ref{Proposition: Surjective submersion}.
	
	\begin{corollary}\label{Corollary: Connected components}
		If $x\in\mathfrak{g}_{\emph{reg}}$, then $\Phi^{-1}(x)$ and its connected components are complex submanifolds of $G\times S_{\emph{reg}}$ having dimension $\emph{rk}(G)$.
	\end{corollary}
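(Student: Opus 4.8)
The plan is to invoke the holomorphic version of the submersion (regular value) theorem and then perform a short dimension count. By Proposition \ref{Proposition: Surjective submersion}, $\Phi$ is a holomorphic submersion whose image is $\mathfrak{g}_{\text{reg}}$, so $x$ lies in the image and the holomorphic implicit function theorem applies: it guarantees that $\Phi^{-1}(x)$ is a (closed, embedded) complex submanifold of $G\times S_{\text{reg}}$, with $\dim_{\mathbb{C}}\Phi^{-1}(x)=\dim_{\mathbb{C}}(G\times S_{\text{reg}})-\dim_{\mathbb{C}}\mathfrak{g}$ near each of its points.

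Next I would compute $\dim_{\mathbb{C}}(G\times S_{\text{reg}})$. Since $\dim_{\mathbb{C}}G=\dim_{\mathbb{C}}\mathfrak{g}$ and $S_{\text{reg}}=\xi+Z_{\mathfrak{g}}(\eta)$ is an affine-linear subspace of $\mathfrak{g}$ of dimension $\dim_{\mathbb{C}}Z_{\mathfrak{g}}(\eta)$, the only thing to check is that $\eta$ is regular. This holds because $(\xi,h,\eta)$ is, by our standing hypothesis, a regular $\mathfrak{sl}_2(\mathbb{C})$-triple, so $\eta\in\mathcal{O}_{\text{reg}}\subseteq\mathfrak{g}_{\text{reg}}$, whence $\dim_{\mathbb{C}}Z_{\mathfrak{g}}(\eta)=\text{rk}(G)$. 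Therefore $\dim_{\mathbb{C}}(G\times S_{\text{reg}})=\dim_{\mathbb{C}}\mathfrak{g}+\text{rk}(G)$, and substituting into the formula above yields $\dim_{\mathbb{C}}\Phi^{-1}(x)=\text{rk}(G)$; moreover this holds near every point of $\Phi^{-1}(x)$, so the fibre is pure of dimension $\text{rk}(G)$.

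Finally, for the assertion about connected components, I would recall that the connected components of a complex manifold are open subsets of it and hence are themselves complex submanifolds (of the ambient manifold) of the same dimension. Applying this to $\Phi^{-1}(x)$, which by the preceding paragraph is a complex submanifold of $G\times S_{\text{reg}}$ that is pure-dimensional of dimension $\text{rk}(G)$, gives the statement for its components and completes the proof.

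There is no substantive obstacle here, which is precisely why the corollary is flagged as an immediate consequence of Proposition \ref{Proposition: Surjective submersion}: the only point demanding even a moment's thought is the identification $\dim_{\mathbb{C}}S_{\text{reg}}=\text{rk}(G)$, and that is exactly the regularity of $\eta$ encoded in our hypothesis on $(\xi,h,\eta)$. Everything else is the submersion theorem together with elementary bookkeeping on dimensions and connected components.
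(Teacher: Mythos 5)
Your proposal is correct and follows essentially the same route as the paper: apply the submersion theorem to conclude $\Phi^{-1}(x)$ is a complex submanifold of dimension $\dim(G)+\dim(S_{\text{reg}})-\dim(\mathfrak{g})=\dim(S_{\text{reg}})$, then use regularity of $\eta$ to get $\dim(S_{\text{reg}})=\dim(Z_{\mathfrak{g}}(\eta))=\text{rk}(G)$, with the remark about connected components being open subsets handling the last claim. No issues.
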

	
	\begin{proof}
		Proposition \ref{Proposition: Surjective submersion} implies that $\Phi^{-1}(x)$ and its connected components are complex submanifolds of dimension $\dim(G)+\dim(S_{\text{reg}})-\dim(\mathfrak{g})=\dim(S_{\text{reg}})$. Furthermore, $\dim(S_{\text{reg}})=\dim(S(\xi,h,\eta))=\dim(Z_{\mathfrak{g}}(\eta))=\text{rk}(G)$, with the third equality following from the fact that $\eta$ is regular. This completes the proof.
	\end{proof}
	
	It turns out that $\Phi$ enjoys some additional structure. To describe it, recall from Section \ref{Subsection: Some general Lie-theoretic preliminaries} that $\mathfrak{g}$ is canonically a holomorphic Poisson variety. At the same time, $G\times S_{\text{reg}}$ is Poisson by virtue of inheriting a holomorphic symplectic form from $G\times\mathfrak{g}$ (see Theorem \ref{Theorem: Bielawski's theorem}). These considerations give context for the following result. 
	
	\begin{proposition}\label{Proposition: Poisson morphism}
		The map $\Phi$ is a morphism of Poisson varieties.
	\end{proposition}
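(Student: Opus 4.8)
The plan is to deduce this from Proposition~\ref{Proposition: Hamiltonian action}, which already establishes that the $G$-action \eqref{Equation: Action on hyperkahler variety} on $G\times S_{\text{reg}}$ (a $G$-invariant symplectic subvariety of $G\times\mathfrak{g}$) is Hamiltonian with moment map $\mu(g,x)=-\Adj_{g^{-1}}^*(x^{\vee})$ taking values in $\mathfrak{g}^*$. The key observation is that, after identifying $\mathfrak{g}\cong\mathfrak{g}^*$ via \eqref{Equation: Isomorphism of representations}, the map $\Phi$ \emph{is} the moment map $\mu$: indeed $\mu(g,x)=(-\Adj_{g^{-1}}(x))^{\vee}=\Phi(g,x)^{\vee}$, since \eqref{Equation: Isomorphism of representations} intertwines the adjoint and coadjoint representations. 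So the claim is reduced to the general fact that a moment map for a Hamiltonian action is a Poisson morphism when the target $\mathfrak{g}^*$ carries the Lie–Poisson structure (and here $\mathfrak{g}$ carries the structure transported from $\mathfrak{g}^*$ via \eqref{Equation: Isomorphism of representations}, so the identification is a Poisson isomorphism by construction).

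Concretely, I would proceed as follows. First, recall that for a Hamiltonian $G$-action with moment map $\mu:X\to\mathfrak{g}^*$, each $y\in\mathfrak{g}$ gives a linear function $\langle\mu(\cdot),y\rangle$ on $X$ (equivalently a coordinate-type function on $\mathfrak{g}^*$), and the defining property of the moment map says its Hamiltonian vector field is the fundamental vector field of $y$. The Lie–Poisson bracket on $\mathfrak{g}^*$ is characterized by $\{\langle\cdot,y_1\rangle,\langle\cdot,y_2\rangle\}_{\mathfrak{g}^*}=\langle\cdot,[y_1,y_2]\rangle$ on linear functions. Then the equivariance of $\mu$ (which for \eqref{Equation: Action on hyperkahler variety} one checks directly, or extracts from the $\Ad^*$-equivariance visible in the formula for $\mu$) yields $\{\mu^*\langle\cdot,y_1\rangle,\mu^*\langle\cdot,y_2\rangle\}_X=\mu^*\{\langle\cdot,y_1\rangle,\langle\cdot,y_2\rangle\}_{\mathfrak{g}^*}$ on these linear generators. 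Since the linear functions on $\mathfrak{g}^*$ generate the full sheaf of functions and Poisson brackets are determined by their values on a generating set (together with the Leibniz rule), this extends to all pairs of functions, giving that $\mu$ is Poisson. Transporting along the Poisson isomorphism \eqref{Equation: Isomorphism of representations} then shows $\Phi$ is a Poisson morphism into $\mathfrak{g}$.

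The step that requires the most care is verifying the moment-map equivariance and correctly matching the sign and normalization conventions — in particular, confirming that the bracket convention used to declare $\mathfrak{g}$ Poisson (via \eqref{Equation: Isomorphism of representations} from the canonical Lie–Poisson structure on $\mathfrak{g}^*$, cf.\ \cite[Prop.~1.3.18]{Chriss}) is exactly the one compatible with the moment-map formula derived in Proposition~\ref{Proposition: Hamiltonian action}, so that no stray sign obstructs the identification of $\Phi$ with $\mu$. I expect the rest to be essentially formal, relying on the standard fact (which I would cite, e.g.\ from \cite{Ortega}) that any moment map is automatically a Poisson map for the Lie–Poisson structure on the dual. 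An alternative, more hands-on route would bypass Proposition~\ref{Proposition: Hamiltonian action} and compute $\{\Phi^*f,\Phi^*g\}$ directly using the explicit symplectic form \eqref{Equation: Symplectic form in right trivialization}; this is feasible but more calculational, and I would prefer the moment-map argument for brevity.
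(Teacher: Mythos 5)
Your proposal is correct and follows essentially the same route as the paper: both identify $\Phi$ composed with the Killing-form isomorphism \eqref{Equation: Isomorphism of representations} with the moment map $\mu$ of Proposition \ref{Proposition: Hamiltonian action}, and then invoke the standard fact that (equivariant) moment maps are Poisson for the Lie--Poisson structure on $\mathfrak{g}^*$ (the paper cites \cite[Lem. 1.4.2(ii)]{Chriss} rather than spelling out the linear-functions argument). The extra details you sketch about equivariance and sign conventions are fine but not needed beyond the citation.
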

	
	\begin{proof}
		Since $\mathfrak{g}$ inherits its Poisson structure from $\mathfrak{g}^*$ and the isomorphism \eqref{Equation: Isomorphism of representations}, it suffices to show that the composition of $\Phi$ with \eqref{Equation: Isomorphism of representations} is Poisson. However, this composite map coincides with the moment map $\mu$ from \eqref{Equation: Moment map}, and (equivariant) moment maps for Hamiltonian $G$-actions are necessarily Poisson (see \cite[Lem. 1.4.2(ii)]{Chriss}). This completes the proof. 
	\end{proof}
	
	\subsection{Structure of the fibres}\label{Subsection: Structure of the fibres} Let us take a moment to examine the nonempty fibres of $\Phi$, which by Proposition \ref{Proposition: Surjective submersion} are precisely those fibres of the form $\Phi^{-1}(x)$, $x\in\mathfrak{g}_{\text{reg}}$. For each such element $x$, Theorem \ref{Theorem: Kostant's theorem} implies that $S_{\text{reg}}$ intersects $\mathcal{O}(-x)$ in a single point, $\tilde{x}\in\mathcal{O}(-x)\cap S_{\text{reg}}$. Since $\tilde{x}$ belongs to the orbit of $-x$, there exists a (non-unique) $g\in G$ such that $\Adj_{g^{-1}}(\tilde{x})=-x$. In what follows, we show $\Phi^{-1}(x)$ to be $R_g(Z_G(\tilde{x}))\times\{\tilde{x}\}\subseteq G\times S_{\text{reg}}$.
	
	\begin{proposition}\label{Proposition: Fibre description}
		If $x\in\mathfrak{g}_{\emph{reg}}$, then 
		\begin{equation}\label{Equation: Fibre description}\Phi^{-1}(x)=R_g(Z_G(\tilde{x}))\times\{\tilde{x}\},
		\end{equation}
		where $\tilde{x}$ is the unique element in $\mathcal{O}(-x)\cap S_{\emph{reg}}$ and $g\in G$ is any element satisfying $\Adj_{g^{-1}}(\tilde{x})=-x$.
	\end{proposition}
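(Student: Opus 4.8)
The plan is to unwind the defining condition $\Phi(h,y)=x$ directly and then invoke Kostant's theorem. First I would note that a point $(h,y)\in G\times S_{\text{reg}}$ lies in $\Phi^{-1}(x)$ precisely when $-\Adj_{h^{-1}}(y)=x$, equivalently $\Adj_{h^{-1}}(y)=-x$; in particular $y\in\mathcal{O}(-x)$. Since $y$ is also constrained to lie in $S_{\text{reg}}$, and since $-x\in\mathfrak{g}_{\text{reg}}$ makes $\mathcal{O}(-x)$ a regular adjoint orbit, Theorem \ref{Theorem: Kostant's theorem} forces $y$ to be the unique point $\tilde{x}\in\mathcal{O}(-x)\cap S_{\text{reg}}$. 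This pins down the second coordinate of any point in the fibre.

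It then remains to describe which $h\in G$ satisfy $\Adj_{h^{-1}}(\tilde{x})=-x$. Using the chosen $g$ with $\Adj_{g^{-1}}(\tilde{x})=-x$, this equation reads $\Adj_{h^{-1}}(\tilde{x})=\Adj_{g^{-1}}(\tilde{x})$. Applying $\Adj_g$, which is the inverse of $\Adj_{g^{-1}}$ since $\Adj$ is a group homomorphism, to both sides shows this is equivalent to $\Adj_{gh^{-1}}(\tilde{x})=\tilde{x}$, i.e. $gh^{-1}\in Z_G(\tilde{x})$. Because $Z_G(\tilde{x})$ is a subgroup, hence closed under inversion, this is in turn equivalent to $h\in Z_G(\tilde{x})g=R_g(Z_G(\tilde{x}))$. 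Combining with the previous paragraph yields $\Phi^{-1}(x)=R_g(Z_G(\tilde{x}))\times\{\tilde{x}\}$, which is exactly \eqref{Equation: Fibre description}.

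I expect no serious obstacle here: once Theorem \ref{Theorem: Kostant's theorem} is in hand, the argument is a short group-theoretic computation. The only points requiring a little care are the bookkeeping with the convention $\Adj_{ab}=\Adj_a\circ\Adj_b$, so that $\Adj_{h^{-1}}=(\Adj_h)^{-1}$ and the step $\Adj_g\Adj_{h^{-1}}=\Adj_{gh^{-1}}$ is legitimate, and the use of closure of $Z_G(\tilde{x})$ under inversion when passing from $gh^{-1}\in Z_G(\tilde{x})$ to $h\in R_g(Z_G(\tilde{x}))$. I would also remark, though it is not strictly needed for the statement, that the right-hand side of \eqref{Equation: Fibre description} does not depend on the choice of $g$: any two admissible choices differ by left multiplication by an element of $Z_G(\tilde{x})$, which leaves the coset $Z_G(\tilde{x})g$ unchanged.
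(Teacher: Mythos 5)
Your proposal is correct and follows essentially the same route as the paper's proof: pin down the second coordinate via Kostant's theorem (uniqueness of $\mathcal{O}(-x)\cap S_{\text{reg}}$), then reduce the first coordinate to a coset condition $h\in Z_G(\tilde{x})g$ by the same $\Adj$ manipulation. The only cosmetic difference is that you phrase the coset step as a chain of equivalences (thereby getting both inclusions at once), while the paper verifies the inclusion $\supseteq$ by direct substitution and argues $\subseteq$ separately.
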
    
	
	\begin{proof}
		To see the inclusion ``$\supseteq$'',  one directly verifies $\Phi(hg,\tilde{x})=x$ for all $h\in Z_G(\tilde{x})$. This is a straightforward exercise. As for the other inclusion, suppose $(h,y)\in\G\times S_{\text{reg}}$ satisfies $\Phi(h,y)=x$, i.e. $-\Adj_{h^{-1}}(y)=x$. It follows that $y=\Adj_h(-x)$ belongs to $\mathcal{O}(-x)$, which together with the fact that $y\in S_{\text{reg}}$ implies $y=\tilde{x}$. Furthermore,
		$$\Adj_{h^{-1}}(\tilde{x})=\Adj_{h^{-1}}(y)=-x=\Adj_{g^{-1}}(\tilde{x}),$$ which one can manipulate to show $\Adj_{hg^{-1}}(\tilde{x})=\tilde{x}$. We conclude that $hg^{-1}\in Z_G(\tilde{x})$, so that $h\in R_g(Z_G(\tilde{x}))$. Hence $(h,y)\in R_g(Z_G(\tilde{x}))\times\{\tilde{x}\}$, as desired.
	\end{proof}
	
	\begin{proposition}\label{Proposition: Isotropic fibres}
		If $x\in\mathfrak{g}_{\emph{reg}}$, then $\Phi^{-1}(x)$ is an isotropic subvariety of $G\times S_{\emph{reg}}$.
	\end{proposition}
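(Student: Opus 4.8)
The plan is to combine the explicit fibre description of Proposition~\ref{Proposition: Fibre description} with the coordinate formula \eqref{Equation: Symplectic form in right trivialization} for $\omega$. Fix $x\in\mathfrak{g}_{\text{reg}}$, and let $\tilde{x}\in\mathcal{O}(-x)\cap S_{\text{reg}}$ and $g\in G$ be as in Proposition~\ref{Proposition: Fibre description}, so that $\Phi^{-1}(x)=R_g(Z_G(\tilde{x}))\times\{\tilde{x}\}$. By Corollary~\ref{Corollary: Connected components} this is a smooth submanifold of $G\times S_{\text{reg}}$ of dimension $\text{rk}(G)$, so it suffices to show that $\omega$ restricts to zero on the tangent space $T_p\Phi^{-1}(x)$ for every $p\in\Phi^{-1}(x)$.

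The first step is to write $T_p\Phi^{-1}(x)$ in the right trivialization. Fix $p=(kg,\tilde{x})$ with $k\in Z_G(\tilde{x})$. For any $\zeta$ in the Lie algebra $Z_{\mathfrak{g}}(\tilde{x})$ of $Z_G(\tilde{x})$, the curve $t\mapsto(\exp(t\zeta)\,kg,\,\tilde{x})$ lies in $\Phi^{-1}(x)$ and has velocity $(d_eR_{kg}(\zeta),\,0)$ at $t=0$. Since $\tilde{x}$ is regular (Theorem~\ref{Theorem: Kostant's theorem}), $\dim Z_{\mathfrak{g}}(\tilde{x})=\text{rk}(G)=\dim\Phi^{-1}(x)$, so these vectors already exhaust the tangent space:
$$T_p\Phi^{-1}(x)=\big\{(d_eR_{kg}(\zeta),\,0):\zeta\in Z_{\mathfrak{g}}(\tilde{x})\big\}\subseteq T_{kg}G\oplus\mathfrak{g}.$$

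The second step is to substitute two such vectors into \eqref{Equation: Symplectic form in right trivialization} with base point $(kg,\tilde{x})$. For $\zeta_1,\zeta_2\in Z_{\mathfrak{g}}(\tilde{x})$, the two Killing-pairing terms in \eqref{Equation: Symplectic form in right trivialization} vanish because the $\mathfrak{g}$-components of the tangent vectors are zero, leaving
$$\omega_p\big((d_eR_{kg}(\zeta_1),0),\,(d_eR_{kg}(\zeta_2),0)\big)=-\langle\tilde{x},[\zeta_1,\zeta_2]\rangle.$$
By invariance of the Killing form this equals $-\langle[\tilde{x},\zeta_1],\zeta_2\rangle$, and $[\tilde{x},\zeta_1]=0$ because $\zeta_1\in Z_{\mathfrak{g}}(\tilde{x})$. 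Hence $\omega_p$ vanishes on $T_p\Phi^{-1}(x)$, and since $p$ was arbitrary, $\Phi^{-1}(x)$ is isotropic.

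I expect the only delicate point to be the bookkeeping in the first step: verifying that tangent vectors to the fibre genuinely appear in the form $(d_eR_{kg}(\zeta),0)$ built from the \emph{right} translation, so that \eqref{Equation: Symplectic form in right trivialization} applies verbatim, and checking the dimension count that upgrades ``$\subseteq$'' to ``$=$''; the rest consists of one-line computations. I would also record the more conceptual route: under \eqref{Equation: Isomorphism of representations} the map $\Phi$ is identified with the moment map $\mu$ of Proposition~\ref{Proposition: Hamiltonian action}, so $T_p\Phi^{-1}(x)=\ker d_p\mu$ is the $\omega$-orthogonal complement of the orbit tangent space $T_p(G\cdot p)$; the $G$-equivariance of $\Phi$ gives $Z_{\mathfrak{g}}(x)\cdot p\subseteq\ker d_p\Phi$, and as both have dimension $\text{rk}(G)$ (Corollary~\ref{Corollary: Connected components}) they agree, so $T_p\Phi^{-1}(x)=Z_{\mathfrak{g}}(x)\cdot p\subseteq T_p(G\cdot p)$; taking $\omega$-orthogonal complements yields $\big(T_p\Phi^{-1}(x)\big)^{\omega}=T_p(G\cdot p)\supseteq T_p\Phi^{-1}(x)$, i.e.\ isotropy.
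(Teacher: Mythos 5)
Your proof is correct and follows essentially the same route as the paper's: it uses the explicit fibre description $R_g(Z_G(\tilde{x}))\times\{\tilde{x}\}$, identifies the tangent spaces as right-translates of $Z_{\mathfrak{g}}(\tilde{x})$ (you via curves plus a dimension count, the paper by translating tangent spaces directly), and then kills $\omega$ with the formula \eqref{Equation: Symplectic form in right trivialization} and $\adj$-invariance of the Killing form. The moment-map sketch you append (fibre tangent space sits inside the $G$-orbit tangent space, whose $\omega$-orthogonal is the kernel of $d\mu$) is a valid alternative in outline, but your main argument is the one the paper gives.
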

	
	\begin{proof}
		Let $\omega$ denote the holomorphic symplectic form on $G\times\mathfrak{g}$, as described in \ref{Subsection: The hyperkahler varieties of interest}. Since the holomorphic symplectic form on $G\times S_{\text{reg}}$ is obtained by restricting $\omega$ (see Theorem \ref{Theorem: Bielawski's theorem}), proving the proposition amounts to showing that $\omega$ restricts to the zero-form on tangent spaces of $\Phi^{-1}(x)$. To identify these tangent spaces, let $\tilde{x}\in\mathcal{O}(-x)\cap S_{\text{reg}}$ and $g\in G$ be as in the statement of Proposition \ref{Proposition: Fibre description}. The proposition implies that each point in $\Phi^{-1}(x)$ has the form $(hg,\tilde{x})$ for $h\in Z_G(\tilde{x})$, and that the tangent space of $\Phi^{-1}(x)$ at $(hg,\tilde{x})$ is the following subspace of $T_{(hg,\tilde{x})}(G\times\mathfrak{g})=T_{hg}G\oplus\mathfrak{g}$:
		\begin{equation}\label{Equation: First tangent space description} T_{(hg,\tilde{x})}(\Phi^{-1}(x))=T_{hg}(R_g(Z_G(\tilde{x})))\oplus\{0\}\subseteq T_{hg}G\oplus\mathfrak{g}.\end{equation}
		Now note that $T_{hg}(R_g(Z_G(\tilde{x})))$ is the right $g$-translate of the tangent space to $Z_G(\tilde{x})$ at $h$, i.e. $T_{hg}(R_g(Z_G(\tilde{x})))=d_hR_g(T_hZ_G(\tilde{x}))$. At the same time, $T_hZ_G(\tilde{x})$ is the right $h$-translate of the tangent space to $Z_G(\tilde{x})$ at $e$, meaning $T_hZ_G(x)=d_eR_h(Z_{\mathfrak{g}}(\tilde{x}))$. It follows that
		$$T_{hg}(R_g(Z_G(\tilde{x})))=d_hR_g(T_hZ_G(\tilde{x}))=d_hR_g(d_eR_h(Z_{\mathfrak{g}}(\tilde{x})))=d_eR_{hg}(Z_{\mathfrak{g}}(\tilde{x})).$$ In particular, one may rewrite \eqref{Equation: First tangent space description} as the statement
		$$T_{(hg,\tilde{x})}(\Phi^{-1}(x))=d_eR_{hg}(Z_{\mathfrak{g}}(\tilde{x}))\oplus\{0\}\subseteq T_{hg}G\oplus\mathfrak{g}.$$ We are therefore reduced to verifying that
		$$\omega_{(hg,\tilde{x})}\big((d_eR_{hg}(y_1),0), (d_eR_{hg}(y_2),0)\big)=0$$ for all $y_1,y_2\in Z_{\mathfrak{g}}(\tilde{x})$. To this end, \eqref{Equation: Symplectic form in right trivialization} implies that $$\omega_{(hg,\tilde{x})}\big((d_eR_{hg}(y_1),0), (d_eR_{hg}(y_2), 0)\big)=-\langle\tilde{x},[y_1,y_2]\rangle.$$
		Using the Killing form's $\adj$-invariance property, the right-hand-side becomes $-\langle[\tilde{x},y_1],y_2\rangle$. This is necessarily zero, as $y_1\in Z_{\mathfrak{g}}(\tilde{x})$.
	\end{proof}
	
	We conclude this section by characterizing various fibres of $\Phi$ up to variety isomorphism. While not strictly essential to proving our main results, these characterizations are in keeping with the long-standing interest in understanding generic and non-generic fibres of integrable systems. We begin with the following proposition. 
	
	\begin{proposition}\label{Proposition: Fibre isomorphism}
		If $x\in\mathfrak{g}_{\emph{reg}}$, then $\Phi^{-1}(x)$ and $Z_G(x)$ are isomorphic as varieties.
	\end{proposition}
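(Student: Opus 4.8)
The plan is to leverage the explicit description of the fibre furnished by Proposition~\ref{Proposition: Fibre description} and combine it with the elementary fact that $G$-stabilizers of points in a common adjoint orbit are conjugate subgroups of $G$. Concretely, fix $x\in\mathfrak{g}_{\text{reg}}$, let $\tilde{x}\in\mathcal{O}(-x)\cap S_{\text{reg}}$ be the unique intersection point supplied by Theorem~\ref{Theorem: Kostant's theorem}, and choose $g\in G$ with $\Adj_{g^{-1}}(\tilde{x})=-x$. Since $\tilde{x}=\Adj_{g}(-x)=-\Adj_g(x)$, we have $Z_G(\tilde{x})=Z_G(\Adj_g(x))=gZ_G(x)g^{-1}$. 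I would then propose the morphism
\[
\psi:Z_G(x)\longrightarrow G\times S_{\text{reg}},\qquad h\longmapsto (gh,\tilde{x}),
\]
and claim that it is an isomorphism onto $\Phi^{-1}(x)$.

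The verification proceeds in a few short steps. First, $\psi$ is a morphism of varieties, being the composite of right translation $R_g$ on $G$ with the closed immersion $G\hookrightarrow G\times\{\tilde{x}\}\subseteq G\times S_{\text{reg}}$. Next, one checks that $\psi$ lands in $\Phi^{-1}(x)$: for $h\in Z_G(x)$,
\[
\Phi(gh,\tilde{x})=-\Adj_{(gh)^{-1}}(\tilde{x})=-\Adj_{h^{-1}}\big(\Adj_{g^{-1}}(\tilde{x})\big)=-\Adj_{h^{-1}}(-x)=\Adj_{h^{-1}}(x)=x.
\]
Injectivity of $\psi$ is immediate. For surjectivity onto $\Phi^{-1}(x)$, I would invoke Proposition~\ref{Proposition: Fibre description}: every point of the fibre has the form $(h'g,\tilde{x})$ with $h'\in Z_G(\tilde{x})=gZ_G(x)g^{-1}$; writing $h'=ghg^{-1}$ with $h\in Z_G(x)$ gives $h'g=gh$, so the point equals $\psi(h)$. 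Finally, the inverse map $\Phi^{-1}(x)\to Z_G(x)$, $(k,\tilde{x})\mapsto g^{-1}k$, is again a morphism (projection to the first factor followed by right translation by $g^{-1}$), so $\psi$ is an isomorphism of varieties.

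I do not anticipate a genuine obstacle here: the substantive work was already carried out in Proposition~\ref{Proposition: Fibre description}, and the remaining content is the standard observation that conjugation by $g$ and right translation by $g$ are variety isomorphisms. The only point requiring (minimal) care is to confirm that the bijection $\psi$ and its inverse are morphisms in the algebraic category rather than merely set-theoretic maps, but this is clear since both are built from the group operations of $G$ and the constant section $\tilde{x}$.
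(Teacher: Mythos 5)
Your proposal is correct and follows essentially the same route as the paper: both rest on the explicit fibre description $\Phi^{-1}(x)=R_g(Z_G(\tilde{x}))\times\{\tilde{x}\}$ together with the conjugacy $Z_G(\tilde{x})=gZ_G(x)g^{-1}$ (using $Z_G(-x)=Z_G(x)$). The only difference is that you assemble these two steps into one explicit morphism $h\mapsto(gh,\tilde{x})$ and verify it directly, whereas the paper simply cites the two isomorphisms abstractly.
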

	
	\begin{proof}
		Let $\tilde{x}$ be as introduced in Proposition \ref{Proposition: Fibre description}, so that \eqref{Equation: Fibre description} implies $\Phi^{-1}(x)$ and $Z_G(\tilde{x})$ are isomorphic as varieties. It then remains to prove that $Z_G(\tilde{x})$ and $Z_G(x)$ are isomorphic. Now, since $\tilde{x}\in\mathcal{O}(-x)$, one sees that $Z_G(\tilde{x})$ and $Z_G(-x)$ are conjugate in $G$. The latter stabilizer coincides with $Z_G(x)$, so that $Z_G(\tilde{x})$ and $Z_G(x)$ are conjugate in $G$. In particular, $Z_G(\tilde{x})$ and $Z_G(x)$ are isomorphic as varieties (in fact, as algebraic groups).
	\end{proof} 
	
	Now note that $Z_G(x)$ is a maximal torus of $G$ whenever $x\in\mathfrak{g}_{\text{reg}}\cap\mathfrak{g}_{\text{ss}}$ (cf. \cite[Lem. 2.1.9, Thm. 2.3.3]{Collingwood}). Proposition \ref{Proposition: Fibre isomorphism} then implies that fibres of $\Phi$ over $\mathfrak{g}_{\text{reg}}\cap\mathfrak{g}_{\text{ss}}$ are isomorphic to maximal tori of $G$, or equivalently to $(\mathbb{C}^*)^{\text{rk}(G)}$. Since $\mathfrak{g}_{\text{reg}}\cap\mathfrak{g}_{\text{ss}}$ is open and dense in $\mathfrak{g}_{\text{reg}}$, it follows that generic fibres of $\Phi$ are isomorphic to $(\mathbb{C}^*)^{\text{rk}(G)}$. This is not true of all fibres, however. To see this, suppose now that $x\in\mathcal{O}_{\text{reg}}$. It is known that $Z_G(x)$ decomposes as an internal direct product $Z(G)\times U_x$, where $Z(G)$ is the centre of $G$ and $U_x$ is a connected closed unipotent subgroup of $Z_G(x)$ (see \cite[Thm. 5.9(b)]{SpringerArithmetical}). The centre is finite by virtue of our having taken $G$ to be semisimple, so that $\dim(U_x)=\dim(Z_G(x))=\text{rk}(G)$. Also, as a connected unipotent group, $U_x$ is necessarily isomorphic to its Lie algebra (see \cite[Chapt. VIII, Thm. 1.1]{Hochschild}). In particular, $U_x\cong\mathbb{C}^{\text{rk}(G)}$ as varieties and it follows that $\Phi^{-1}(x)\cong Z_G(x)\cong Z(G)\times\mathbb{C}^{\text{rk}(G)}$ has $\vert Z(G)\vert$ connected components, each isomorphic to $\mathbb{C}^{\text{rk}(G)}$.   
	
	\subsection{The abstract integrable system}\label{Subsection: The abstract integrable system}
	
	While \cite{Fernandes} discusses abstract integrable systems in considerable generality, we shall focus on systems arising in a specific way. To this end, we will need to review a few definitions involving a holomorphic symplectic manifold $X$ and a holomorphic foliation $\mathcal{F}$ of $X$. Firstly, $\mathcal{F}$ is called an \textit{isotropic foliation} if its leaves are isotropic submanifolds of $X$. Secondly, one calls $\mathcal{F}$ \textit{Poisson complete} if the Poisson bracket of two locally defined first integrals of $\mathcal{F}$ is always itself a first integral. We may now state a holomorphic counterpart of Proposition 2.18(2) from \cite{Fernandes}. 
	
	\begin{proposition}\label{Proposition: Conditions for ANCI system}
		Let $X$ be a holomorphic symplectic manifold with a holomorphic foliation $\mathcal{F}$. If $\mathcal{F}$ is isotropic and Poisson complete, then $(X,\mathcal{F})$ is an abstract integrable system.
	\end{proposition}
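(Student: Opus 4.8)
To set up, write $r$ for the complex dimension of the foliation $\mathcal{F}$ and $n=\dim_{\mathbb{C}}X$; the goal is to show that $(X,\mathcal{F})$ meets the condition in Definition \ref{Definition: ANCIS}, which will in fact exhibit it as an abstract integrable system of rank $r$. Let $\omega$ denote the holomorphic symplectic form and $\omega^{\flat}\colon TX\to T^{*}X$, $v\mapsto\omega(v,\cdot)$, the induced bundle isomorphism, so that the Hamiltonian vector field $X_{f}$ of a holomorphic function $f$ is determined by $\omega^{\flat}(X_{f})=df$. For a subbundle $D\subseteq TX$ let $D^{\perp}\subseteq TX$ be its pointwise $\omega$-orthogonal complement; non-degeneracy of $\omega$ yields $\omega^{\flat}(D^{\perp})=\Ann(D)$ and $(D^{\perp})^{\perp}=D$.

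The plan is to first understand the orthogonal distribution $(T\mathcal{F})^{\perp}$, which has rank $n-r$. In a foliated chart for $\mathcal{F}$ the conormal module $\Ann(T\mathcal{F})$ is freely generated by the differentials of the $n-r$ transverse coordinate functions, each of which is a local first integral of $\mathcal{F}$; hence, locally, $\Ann(T\mathcal{F})$ is generated by $\{\,df : f\text{ a local first integral of }\mathcal{F}\,\}$. Transporting through the isomorphism $\omega^{\flat}$, this says that $(T\mathcal{F})^{\perp}$ is locally spanned by the Hamiltonian vector fields $\{\,X_{f} : f\text{ a local first integral of }\mathcal{F}\,\}$; in particular $X_{f}$ is a section of $(T\mathcal{F})^{\perp}$ for every first integral $f$.

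Next I would prove that $(T\mathcal{F})^{\perp}$ is involutive, and this is the step where Poisson completeness is used; I expect it to be the crux of the argument. Taking the transverse coordinate functions $f_{1},\dots,f_{n-r}$ of a foliated chart for $\mathcal{F}$, the vector fields $X_{f_{1}},\dots,X_{f_{n-r}}$ form a local frame for $(T\mathcal{F})^{\perp}$: they are sections of it by the previous paragraph, and they are pointwise linearly independent because the $df_{i}$ are and $\omega^{\flat}$ is an isomorphism. Since $[X_{f_{i}},X_{f_{j}}]=X_{\{f_{i},f_{j}\}}$ and $\{f_{i},f_{j}\}$ is again a first integral of $\mathcal{F}$ by Poisson completeness, each bracket $[X_{f_{i}},X_{f_{j}}]$ is a section of $(T\mathcal{F})^{\perp}$; involutivity follows. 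By the holomorphic Frobenius theorem, $(T\mathcal{F})^{\perp}$ is the tangent distribution of a holomorphic foliation $\mathcal{F}^{\perp}$ of $X$ of rank $n-r$. The isotropy hypothesis is exactly the statement $T\mathcal{F}\subseteq(T\mathcal{F})^{\perp}=T\mathcal{F}^{\perp}$, so $\mathcal{F}$ refines $\mathcal{F}^{\perp}$, while $(T\mathcal{F}^{\perp})^{\perp}=T\mathcal{F}$.

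Finally I would produce the desired first integrals. Fix $p\in X$ and take a foliated chart for $\mathcal{F}^{\perp}$ on a neighbourhood $U$ of $p$. Since $\mathcal{F}^{\perp}$ has codimension $r$, its transverse coordinate functions furnish holomorphic maps $f_{1},\dots,f_{r}\colon U\to\mathbb{C}$ with $df_{1},\dots,df_{r}$ linearly independent at every point of $U$, each of which is a first integral of $\mathcal{F}^{\perp}$, i.e. each $df_{k}$ annihilates $T\mathcal{F}^{\perp}$. As $T\mathcal{F}\subseteq T\mathcal{F}^{\perp}$, each $f_{k}$ is also a first integral of $\mathcal{F}$. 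As $df_{k}$ annihilates $(T\mathcal{F})^{\perp}=T\mathcal{F}^{\perp}$, the relation $\omega(X_{f_{k}},\cdot)=df_{k}$ forces $X_{f_{k}}\in(T\mathcal{F}^{\perp})^{\perp}=T\mathcal{F}$, so $X_{f_{k}}$ is tangent to $\mathcal{F}$. Lastly, linear independence of $df_{1},\dots,df_{r}$ on $U$ together with the isomorphism $\omega^{\flat}$ makes $X_{f_{1}},\dots,X_{f_{r}}$ pointwise linearly independent on $U$; since $T\mathcal{F}$ has rank $r$, these $r$ Hamiltonian vector fields span $T\mathcal{F}$ over $U$. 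This is precisely the condition of Definition \ref{Definition: ANCIS}, so $(X,\mathcal{F})$ is an abstract integrable system. Apart from the involutivity step, everything here is symplectic linear algebra ($\omega^{\flat}(D^{\perp})=\Ann(D)$, $(D^{\perp})^{\perp}=D$, $[X_{f},X_{g}]=X_{\{f,g\}}$) together with two invocations of the Frobenius theorem.
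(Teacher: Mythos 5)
Your proof is correct. Note that the paper itself supplies no argument for this proposition: it is stated as a holomorphic counterpart of Proposition 2.18(2) of Fernandes--Laurent-Gengoux--Vanhaecke, with the proof deferred to that reference, so what you have written fills in a proof the paper leaves as a citation, and it does so along the standard lines underlying the cited result. Each step checks out: $\omega^{\flat}\bigl((T\mathcal{F})^{\perp}\bigr)=\Ann(T\mathcal{F})$ identifies the polar distribution with the span of Hamiltonian vector fields of local first integrals (the transverse coordinates of a foliated chart providing a frame); Poisson completeness together with $[X_f,X_g]=X_{\{f,g\}}$ (the sign convention being immaterial) gives involutivity of that frame and hence of $(T\mathcal{F})^{\perp}$, which is a holomorphic subbundle of constant rank $n-r$, so the holomorphic Frobenius theorem produces the polar foliation $\mathcal{F}^{\perp}$; isotropy is exactly $T\mathcal{F}\subseteq T\mathcal{F}^{\perp}$; and the $r$ transverse coordinates of a foliated chart for $\mathcal{F}^{\perp}$ are first integrals of $\mathcal{F}$ whose Hamiltonian vector fields are pointwise independent and, by $(T\mathcal{F}^{\perp})^{\perp}=T\mathcal{F}$, tangent to $\mathcal{F}$, hence span $T\mathcal{F}$ on the chart --- precisely the local condition of Definition \ref{Definition: ANCIS}. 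Your parenthetical remark that the system has rank $r$ is also consistent with how the paper uses the proposition in Theorem \ref{Theorem: Phi is ANCI}, where the rank is read off as the dimension of the foliation.
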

	
	\begin{remark}
		A true holomorphic counterpart of \cite[Prop. 2.18(2)]{Fernandes}, as stated, would be slightly more general than what appears above. It would relax the requirement that $X$ be holomorphic symplectic, instead taking $X$ to be a holomorphic Poisson manifold having a \textit{regular} Poisson structure. For further details, we refer the reader to \cite{Fernandes}.
	\end{remark}
	
	Proposition \ref{Proposition: Conditions for ANCI system} will be our main technical tool for realizing an abstract integrable system on $G\times S_{\text{reg}}$, which we now discuss. Indeed, recall from Proposition \ref{Proposition: Surjective submersion} that $\Phi:G\times S_{\text{reg}}\rightarrow\mathfrak{g}$ is a holomorphic submersion. It follows that the connected components of $\Phi$'s fibres are the leaves of a holomorphic foliation $\mathcal{F}_{\text{reg}}$ of $G\times S_{\text{reg}}$.
	
	\begin{theorem}\label{Theorem: Phi is ANCI}
		With $\mathcal{F}_{\emph{reg}}$ as defined above, $(G\times S_{\emph{reg}},\mathcal{F}_{\emph{reg}})$ is an abstract integrable system of rank equal to $\emph{rk}(G)$.
	\end{theorem}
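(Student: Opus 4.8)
The plan is to apply Proposition \ref{Proposition: Conditions for ANCI system}: it suffices to verify that the foliation $\mathcal{F}_{\text{reg}}$ is isotropic and Poisson complete, and then to identify its rank. Recall that $\mathcal{F}_{\text{reg}}$ is well-defined because $\Phi$ is a holomorphic submersion (Proposition \ref{Proposition: Surjective submersion}), and that by construction its leaves are the connected components of the fibres $\Phi^{-1}(x)$, $x\in\mathfrak{g}_{\text{reg}}$.

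Isotropy is essentially immediate. Proposition \ref{Proposition: Isotropic fibres} tells us that each fibre $\Phi^{-1}(x)$ is an isotropic subvariety of $G\times S_{\text{reg}}$, and a connected component of an isotropic submanifold is again isotropic, since the holomorphic symplectic form still restricts to zero on its tangent spaces. Hence every leaf of $\mathcal{F}_{\text{reg}}$ is isotropic.

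The substantive point is Poisson completeness, and the idea is to reduce it to the fact that $\Phi$ is a Poisson morphism (Proposition \ref{Proposition: Poisson morphism}). First I would observe that, locally, the first integrals of $\mathcal{F}_{\text{reg}}$ are precisely the pullbacks along $\Phi$ of holomorphic functions on open subsets of $\mathfrak{g}$. Indeed, since $\Phi$ is a holomorphic submersion, each point of $G\times S_{\text{reg}}$ has an open neighbourhood $U$ that is a foliation chart for $\mathcal{F}_{\text{reg}}$ in which the plaques are the connected fibres of $\Phi\vert_U$ and the transverse slice is identified, via $\Phi$, with an open subset of $\mathfrak{g}$; consequently any holomorphic function on $U$ that is constant on the plaques has the form $\varphi\circ\Phi$ for some $\varphi$ holomorphic on the open set $\Phi(U)\subseteq\mathfrak{g}$ (open because submersions are open maps). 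Given two local first integrals $\varphi_1\circ\Phi$ and $\varphi_2\circ\Phi$ on $U$, the Poisson-morphism property yields $\{\varphi_1\circ\Phi,\varphi_2\circ\Phi\}=\{\varphi_1,\varphi_2\}_{\mathfrak{g}}\circ\Phi$, which is again a pullback along $\Phi$ and hence again a first integral of $\mathcal{F}_{\text{reg}}$. This establishes Poisson completeness.

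With both hypotheses of Proposition \ref{Proposition: Conditions for ANCI system} verified, $(G\times S_{\text{reg}},\mathcal{F}_{\text{reg}})$ is an abstract integrable system; its rank is the dimension of the leaves of $\mathcal{F}_{\text{reg}}$, which equals $\text{rk}(G)$ by Corollary \ref{Corollary: Connected components}. The one place demanding care is the local identification of first integrals of $\mathcal{F}_{\text{reg}}$ with pullbacks along $\Phi$: globally the connected components of a fibre of $\Phi$ can be separated by a first integral, so this is genuinely a local statement, but Poisson completeness is itself a local condition, so working in a foliation chart suffices. Everything else is bookkeeping with results already in hand.
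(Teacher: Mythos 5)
Your proof is correct and follows essentially the same route as the paper: both verify the hypotheses of Proposition \ref{Proposition: Conditions for ANCI system}, getting isotropy of the leaves from Proposition \ref{Proposition: Isotropic fibres} and the rank $\mathrm{rk}(G)$ from Corollary \ref{Corollary: Connected components}. The only difference is that where the paper cites \cite[Ex. 2.14]{Fernandes} for Poisson completeness of the fibre foliation of a Poisson submersion (Propositions \ref{Proposition: Surjective submersion} and \ref{Proposition: Poisson morphism}), you prove that fact directly by locally identifying first integrals with pullbacks along $\Phi$ in a submersion chart --- a correct, self-contained substitute for the citation.
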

	
	\begin{proof}
		Corollary \ref{Corollary: Connected components} implies that $\mathcal{F}_{\text{reg}}$ is a $\text{rk}(G)$-dimensional foliation. It then just remains to prove that $(G\times S_{\text{reg}},\mathcal{F}_{\text{reg}})$ is an abstract integrable system, which we will accomplish by showing the hypotheses of Proposition \ref{Proposition: Conditions for ANCI system} to be satisfied. To begin, Proposition \ref{Proposition: Isotropic fibres} implies that $\mathcal{F}_{\text{reg}}$ is an isotropic foliation. Also, as $\Phi$ is a Poisson submersion (see Propositions \ref{Proposition: Surjective submersion} and \ref{Proposition: Poisson morphism}), Example 2.14 from \cite{Fernandes} explains that $\mathcal{F}_{\text{reg}}$ is necessarily Poisson complete. This concludes the proof.     
	\end{proof}
	
	\subsection{Moment maps and abstract integrable systems}\label{Subsection: Moment maps and abstract integrable systems}
	We now discuss a generalization of Theorem \ref{Theorem: Phi is ANCI}. To this end, recall that $\Phi:G\times S_{\text{reg}}\rightarrow\mathfrak{g}$ induces the abstract integrable system $(G\times S_{\text{reg}},\mathcal{F}_{\text{reg}})$ as follows: the leaves of $\mathcal{F}_{\text{reg}}$ are the connected components of $\Phi$'s fibres. We also know $\Phi$ to be a ($\mathfrak{g}$-valued) moment map (by Proposition \ref{Proposition: Hamiltonian action}), and it is natural to imagine that there are some general conditions under which a moment map will, analogously to $\Phi$, induce an abstract integrable system. This is indeed the case, as we shall establish. We will work in the holomorphic category for the sake of consistency with the rest of the paper, and the reader should interpret all relevant notions accordingly (ex. manifolds as complex manifolds, maps as holomorphic maps, etc.). Nevertheless, many parts of our discussion will also hold in the smooth category.    
	
	Let all notation be as established in Section \ref{Section: Background}, and let $X$ be a holomorphic symplectic manifold. Suppose that $X$ carries a Hamiltonian action of $G$. Using the isomorphism \eqref{Equation: Isomorphism of representations} to identify $\mathfrak{g}^*$ with $\mathfrak{g}$, we will present the moment map as $\mu:X\rightarrow\mathfrak{g}$. Also, given $x\in X$, let $Z_G(x)\subseteq G$ and $Z_{\mathfrak{g}}(x)\subseteq\mathfrak{g}$ denote the $G$-stabilizer of $x$ and its Lie algebra, respectively. We shall assume that the $G$-action on $X$ is \textit{locally free}, meaning that $Z_{\mathfrak{g}}(x)=\{0\}$ for all $x\in X$. This is equivalent to $\mu$ being a submersion (see \cite[Prop. III.2.3]{Audin}), and we may define $\mathcal{F}_{\mu}$ to be the holomorphic foliation of $X$ whose leaves are the connected components of $\mu$'s fibres. With this in mind, our generalization of Theorem \ref{Theorem: Phi is ANCI} will take the following form: finding conditions on $X$ and $\mu$ under which the proof of Theorem \ref{Theorem: Phi is ANCI} will show $(X,\mathcal{F}_{\mu})$ to be an abstract integrable system after we replace $G\times S_{\text{reg}}$, $\Phi$, and $\mathcal{F}_{\text{reg}}$ with $X$, $\mu$, and $\mathcal{F}_{\mu}$, respectively. Referring to the proof of Theorem \ref{Theorem: Phi is ANCI}, one readily sees that there is only one possible issue --- whether $\mathcal{F}_{\mu}$ is an isotropic foliation, or equivalently, all fibres of $\mu$ are isotropic in $X$.   
	
	\begin{theorem}\label{Theorem: General symplectic approach} 
		Let $X$ be a holomorphic symplectic manifold on which $G$ acts locally freely and in a Hamiltonian fashion with moment map $\mu:X\rightarrow\mathfrak{g}$. Then, $\mathcal{F}_{\mu}$ is an isotropic foliation of $X$ if and only if $\mu(X)\subseteq\mathfrak{g}_{\emph{reg}}$ and $\dim(X)=\dim(G)+\emph{rk}(G)$. In this case, $(X,\mathcal{F}_{\mu})$ is an abstract integrable system of rank equal to $\emph{rk}(G)$.
	\end{theorem}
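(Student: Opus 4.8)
The plan is to reduce the statement to a pointwise computation in linear symplectic algebra and then globalize it using that a submersion is an open map. Throughout, write $\omega$ for the symplectic form on $X$, and for a subspace $W\subseteq T_pX$ let $W^{\omega}:=\{v\in T_pX:\omega_p(v,w)=0\text{ for all }w\in W\}$ be its $\omega$-orthogonal complement, so that $\dim W^{\omega}=\dim X-\dim W$ and $(W^{\omega})^{\omega}=W$. Since the $G$-action is locally free, $\mu$ is a submersion (as recalled before the statement, via \cite[Prop. III.2.3]{Audin}); hence each fibre $\mu^{-1}(\mu(p))$ is a submanifold of dimension $\dim X-\dim G$ and $\mathcal{F}_{\mu}$ is a foliation of that dimension. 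Fix $p\in X$, set $\mathcal{O}_p:=G\cdot p$, and write $\widehat{\zeta}$ for the fundamental vector field on $X$ generated by $\zeta\in\mathfrak{g}$. Local freeness identifies $T_p\mathcal{O}_p$ with $\{\widehat{\zeta}(p):\zeta\in\mathfrak{g}\}$ and gives $\dim T_p\mathcal{O}_p=\dim G$. The defining property of a moment map gives $\omega_p(\widehat{\zeta}(p),v)=\pm\,(d_p\mu(v))(\zeta)$ for all $\zeta\in\mathfrak{g}$ and $v\in T_pX$ (after identifying $\mathfrak{g}^{*}\cong\mathfrak{g}$ via \eqref{Equation: Isomorphism of representations}), whence
\[
T_p\big(\mu^{-1}(\mu(p))\big)=\ker d_p\mu=(T_p\mathcal{O}_p)^{\omega}.
\]
Because $(W^{\omega})^{\omega}=W$, the fibre is isotropic at $p$ if and only if $(T_p\mathcal{O}_p)^{\omega}\subseteq T_p\mathcal{O}_p$, i.e.\ if and only if $\mathcal{O}_p$ is coisotropic at $p$.

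Next I would convert this coisotropy condition into a statement about regularity of $\mu(p)$. Equivariance of $\mu$ yields $d_p\mu(\widehat{\zeta}(p))=\pm[\zeta,\mu(p)]$, so $d_p\mu$ maps $T_p\mathcal{O}_p$ onto $[\mathfrak{g},\mu(p)]=T_{\mu(p)}\mathcal{O}(\mu(p))$; consequently
\[
\dim\big(T_p\mathcal{O}_p\cap\ker d_p\mu\big)=\dim G-\dim T_{\mu(p)}\mathcal{O}(\mu(p))=\dim Z_{\mathfrak{g}}(\mu(p)).
\]
On the other hand $T_p\mathcal{O}_p\cap\ker d_p\mu=T_p\mathcal{O}_p\cap(T_p\mathcal{O}_p)^{\omega}$ is always contained in $(T_p\mathcal{O}_p)^{\omega}$, and equals it precisely when $\mathcal{O}_p$ is coisotropic at $p$; comparing dimensions and using $\dim(T_p\mathcal{O}_p)^{\omega}=\dim X-\dim G$, we conclude that $\mathcal{O}_p$ is coisotropic at $p$ if and only if $\dim Z_{\mathfrak{g}}(\mu(p))=\dim X-\dim G$. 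Combining with the previous paragraph, $\mathcal{F}_{\mu}$ is an isotropic foliation if and only if $\dim Z_{\mathfrak{g}}(\mu(p))=\dim X-\dim G$ for every $p\in X$.

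The claimed equivalence then follows. If $\mu(X)\subseteq\mathfrak{g}_{\text{reg}}$ and $\dim X=\dim G+\text{rk}(G)$, then for every $p$ both sides of the displayed identity equal $\text{rk}(G)$, so $\mathcal{F}_{\mu}$ is isotropic. Conversely, if $\mathcal{F}_{\mu}$ is isotropic then $p\mapsto\dim Z_{\mathfrak{g}}(\mu(p))$ is constant with value $c:=\dim X-\dim G\ge\text{rk}(G)$; since $\mu$ is a submersion it is an open map, and $\mathfrak{g}_{\text{reg}}$ is open and dense, so there is a point $p_0$ with $\mu(p_0)\in\mathfrak{g}_{\text{reg}}$, forcing $c=\dim Z_{\mathfrak{g}}(\mu(p_0))=\text{rk}(G)$. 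Hence $\dim X=\dim G+\text{rk}(G)$ and $\dim Z_{\mathfrak{g}}(\mu(p))=\text{rk}(G)$ for every $p$, i.e.\ $\mu(X)\subseteq\mathfrak{g}_{\text{reg}}$. Finally, under these conditions $\mathcal{F}_{\mu}$ is isotropic, and it is also Poisson complete: $\mu$ is a Poisson submersion (moment maps are Poisson by \cite[Lem. 1.4.2(ii)]{Chriss}), so Poisson completeness follows exactly as in the proof of Theorem \ref{Theorem: Phi is ANCI} via Example 2.14 of \cite{Fernandes}. Proposition \ref{Proposition: Conditions for ANCI system} then shows that $(X,\mathcal{F}_{\mu})$ is an abstract integrable system, whose rank equals the dimension of the leaves of $\mathcal{F}_{\mu}$, namely $\dim X-\dim G=\text{rk}(G)$.

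The main obstacle is the middle step: assembling the moment-map orthogonality relation $\ker d_p\mu=(T_p\mathcal{O}_p)^{\omega}$ together with the equivariance identity $d_p\mu(\widehat{\zeta}(p))=\pm[\zeta,\mu(p)]$ to see that isotropy of a $\mu$-fibre at $p$ is equivalent to $\dim Z_{\mathfrak{g}}(\mu(p))=\dim X-\dim G$, and hence, once the ambient dimension is pinned down, to regularity of $\mu(p)$. Everything after that is bookkeeping: the converse globalization is a short argument using only that submersions are open and that $\mathfrak{g}_{\text{reg}}$ is dense, and the final assertion is a direct appeal to Proposition \ref{Proposition: Conditions for ANCI system} with the Poisson-completeness argument imported verbatim from the proof of Theorem \ref{Theorem: Phi is ANCI}.
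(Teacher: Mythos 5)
Your proof is correct and follows essentially the same route as the paper: reduce isotropy of the fibres to the pointwise dimension condition $\dim Z_{\mathfrak{g}}(\mu(p))=\dim X-\dim G$, then globalize using that $\mu$ is open and $\mathfrak{g}_{\text{reg}}$ is dense, and finish via Proposition \ref{Proposition: Conditions for ANCI system} with Poisson completeness as in Theorem \ref{Theorem: Phi is ANCI}. The only cosmetic difference is that you derive the key pointwise fact from the identities $\ker d_p\mu=(T_p\mathcal{O}_p)^{\omega}$ and equivariance, whereas the paper cites \cite[Lemma III.2.11]{Audin} for the kernel of the restricted form being the tangent space to the $Z_G(\mu(p))$-orbit.
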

	
	\begin{proof}
		Let $\omega$ denote the holomorphic symplectic form on $X$ and $\omega\vert_{\mu^{-1}(\mu(x))}$ its restriction to the level set $\mu^{-1}(\mu(x))$, $x\in X$. It follows that $\mathcal{F}_{\mu}$ is an isotropic foliation if and only if 
		\begin{equation}\label{Equation: First equivalent condition}
		\omega\vert_{\mu^{-1}(\mu(x))}=0\text{ for all }x\in X.
		\end{equation}
		Now let $(\omega\vert_{\mu^{-1}(\mu(x))})_x$ denote the bilinear form on $T_x(\mu^{-1}(\mu(x)))$ obtained by evaluating $\omega\vert_{\mu^{-1}(\mu(x))}$ at $x$, noting that \eqref{Equation: First equivalent condition} holds if and only if 
		\begin{equation}\label{Equation: Second equivalent condition}
		(\omega\vert_{\mu^{-1}(\mu(x))})_x=0\text{ for all }x\in X.
		\end{equation}
		The kernel of $(\omega\vert_{\mu^{-1}(\mu(x))})_x$ is the tangent space to the $Z_G(\mu(x))$-orbit of $x$, i.e. $T_x(Z_G(\mu(x))\cdot x)$ (see \cite[Lemma III.2.11]{Audin}), so that \eqref{Equation: Second equivalent condition} holds if and only if \begin{equation}\label{Equation: Third equivalent condition} 
		T_x(Z_G(\mu(x))\cdot x)=T_x(\mu^{-1}(\mu(x)))\text{ for all }x\in X.
		\end{equation}
		As $T_x(Z_G(\mu(x))\cdot x)$ is a subspace of $T_x(\mu^{-1}(\mu(x)))$, \eqref{Equation: Third equivalent condition} is true if and only if \begin{equation}\label{Equation: Fourth equivalent condition}
		\dim(T_x(Z_G(\mu(x))\cdot x))=\dim(T_x(\mu^{-1}(\mu(x)))) \text{ for all }x\in X.
		\end{equation}
		In the interest of modifying \eqref{Equation: Fourth equivalent condition}, we make two observations. Firstly, $\mu$ being a submersion implies $\dim(T_x(\mu^{-1}(\mu(x))))=\dim(X)-\dim(\mathfrak{g})=\dim(X)-\dim(G)$. Secondly, since the $G$-action is locally free, we must have $\dim(T_x(Z_G(\mu(x))\cdot x))=\dim(Z_G(\mu(x)))$. It follows that \eqref{Equation: Fourth equivalent condition} holds if and only if 
		\begin{equation}\label{Equation: Fifth equivalent condition}
		\dim(Z_G(\mu(x)))=\dim(X)-\dim(G) \text{ for all }x\in X.
		\end{equation}
		
		By virtue of the discussion above, we are reduced to showing that \eqref{Equation: Fifth equivalent condition} holds if and only if $\mu(X)\subseteq\mathfrak{g}_{\text{reg}}$ and $\dim(X)=\dim(G)+\text{rk}(G)$. To this end, assume that \eqref{Equation: Fifth equivalent condition} is satisfied. Since $\mu$ is a submersion, its image $\mu(X)$ is necessarily open in $\mathfrak{g}$. The set of regular elements is dense in $\mathfrak{g}$ (as discussed in Section \ref{Subsection: Some general Lie-theoretic preliminaries}) and must therefore intersect $\mu(X)$, i.e. $\mu(y)\in\mathfrak{g}_{\text{reg}}$ for some $y\in X$. Note that $\dim(Z_G(\mu(y)))=\text{rk}(G)$, which together with \eqref{Equation: Fifth equivalent condition} gives $\dim(X)=\dim(G)+\text{rk}(G)$. Moreover, \eqref{Equation: Fifth equivalent condition} now reads as
		$$\dim(Z_G(\mu(x)))=\text{rk}(G) \text{ for all }x\in X.$$
		This is the statement that $\mu(x)\in\mathfrak{g}_{\text{reg}}$ for all $x\in X$, or equivalently $\mu(X)\subseteq\mathfrak{g}_{\text{reg}}$.
		
		Conversely, assume that $\mu(X)\subseteq\mathfrak{g}_{\text{reg}}$ and $\dim(X)=\dim(G)+\text{rk}(G)$. It is then immediate that both sides of \eqref{Equation: Fifth equivalent condition} coincide with $\text{rk}(G)$ for all $x\in X$, so that \eqref{Equation: Fifth equivalent condition} holds. This completes the proof.              
	\end{proof}
	
	\section{Some integrable systems}\label{Section: Some integrable systems}
	While the abstract integrable system $(G\times S_{\text{reg}},\mathcal{F}_{\text{reg}})$ has the virtue of being completely canonical, it lacks the explicit Hamiltonian functions of a traditional integrable system. Nevertheless, it is possible to construct integrable systems on $G\times S_{\text{reg}}$. We shall illustrate this in two ways, devoting Section \ref{Subsection: An integrable system of rank equal to rk(G)} to the first and Section \ref{Subsection: A family of completely integrable systems} to the second.
	
	\subsection{An integrable system of rank equal to $\text{rk}(G)$}\label{Subsection: An integrable system of rank equal to rk(G)} 
	Consider the algebra $\mathbb{C}[\mathfrak{g}]:=\mathrm{Sym}(\mathfrak{g}^*)$ of polynomial functions on the variety $\mathfrak{g}$. The Poisson structure on $\mathfrak{g}$ (discussed in Section \ref{Subsection: Some general Lie-theoretic preliminaries}) gives $\mathbb{C}[\mathfrak{g}]$ the structure of a Poisson algebra. Also, the adjoint action induces a representation of $G$ on $\mathbb{C}[\mathfrak{g}]$, and one can form the subalgebra $\mathbb{C}[\mathfrak{g}]^G\subseteq\mathbb{C}[\mathfrak{g}]$ of $G$-invariant polynomials. Each of these invariant polynomials Poisson-commutes with every polynomial on $\mathfrak{g}$, i.e. 
	\begin{equation}\label{Equation: Poisson centre}
	\{f,h\}=0\text{ for all }f\in\mathbb{C}[\mathfrak{g}]^G,\text{ } h\in\mathbb{C}[\mathfrak{g}].
	\end{equation}
	Also, it is a celebrated fact that $\mathbb{C}[\mathfrak{g}]^G$ is generated by $\text{rk}(G)$ algebraically independent homogeneous generators. Let $f_1,f_2,\ldots,f_{\text{rk}(G)}\in\mathbb{C}[\mathfrak{g}]^G$ be a choice of such generators, fixed for the rest of this paper, and consider the map
	\begin{eqnarray}F\;:\;\mathfrak{g} & \rightarrow & \mathbb{C}^{\text{rk}(G)}\nonumber\\ x & \mapsto & (f_1(x),f_2(x),\ldots,f_{\text{rk}(G)}(x)).\nonumber\end{eqnarray}
	It is known that $\mathfrak{g}_{\text{reg}}$ is the locus on which $df_1,df_2,\ldots,df_{\text{rk}(G)}$ are linearly independent, or equivalently
	\begin{equation}\label{Equation: Functional independence}
	\mathfrak{g}_{\text{reg}}=\{x\in\mathfrak{g}:\text{rank}(d_xF)=\text{rk}(G)\}
	\end{equation}   
	(see \cite[Thm. 9]{KostantLie}).
	
	Now choose a basis $\{\theta_1,\theta_2,\ldots,\theta_{\dim(G)}\}$ of $\mathfrak{g}^*$, viewed as a system of global holomorphic coordinates on $\mathfrak{g}$. For each $x\in\mathfrak{g}$, let $[d_xF]$ denote the $\text{rk}(G)\times\dim(G)$ Jacobian matrix representative of $d_xF$, i.e. 
	\begin{equation}\label{Equation: First Jacobian}[d_xF]_{ij}:=\frac{\partial f_i}{\partial\theta_j}(x),\quad i=1,\ldots,\text{rk}(G), \text{ }j=1,\ldots,\dim(G).
	\end{equation}
	Choosing a point $y\in\mathfrak{g}_{\text{reg}}$, we may use \eqref{Equation: Functional independence} and conclude that $[d_yF]$ has rank equal to $\text{rk}(G)$. It follows that 
	\begin{equation}\label{Equation: Nonvanishing minor}
	\det(M(y))\neq 0
	\end{equation}
	some $\text{rk}(G)\times\text{rk}(G)$ submatrix $M(y)$ of $[d_yF]$. Reordering our basis vectors if necessary, we may assume that this submatrix consists of the first $\text{rk}(G)$ columns. Now consider the holomorphic functions $\Phi_1,\Phi_2,\ldots,\Phi_{\dim(G)}:G\times S_{\text{reg}}\rightarrow\mathbb{C}$ defined as follows:
	\begin{equation}\label{Equation: Holomorphic functions for NCI system}
	\Phi_i=\begin{cases}f_i\circ\Phi &\mbox{if } i=1,\ldots,\text{rk}(G) \\ 
	\theta_i\circ\Phi & \mbox{if } i=\text{rk}(G)+1,\ldots,\dim(G).\end{cases}.
	\end{equation}
	
	\begin{theorem}\label{Theorem: An integrable system}
		The functions $\Phi_i$ form an integrable system on $G\times S_{\emph{reg}}$, and the rank of this system is $\emph{rk}(G)$.
	\end{theorem}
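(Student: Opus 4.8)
The plan is to check, in turn, the three conditions of Definition~\ref{Definition: NCIS} for $X = G\times S_{\text{reg}}$, with $n := \dim X$ and with $r := \text{rk}(G)$. First I would record that $n = \dim(G) + \dim(S_{\text{reg}}) = \dim(G) + \text{rk}(G)$, exactly as computed in the proof of Corollary~\ref{Corollary: Connected components}; hence $n - r = \dim(G)$, which is precisely the number of functions $\Phi_1,\ldots,\Phi_{\dim(G)}$ defined in \eqref{Equation: Holomorphic functions for NCI system}. The inequality $r \leq n/2$ then amounts to $\text{rk}(G) \leq \dim(G)$, which holds since $G$ is semisimple.

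For the Poisson bracket relations, I would observe that each $\Phi_k$ is the pullback along $\Phi$ of a polynomial $h_k \in \mathbb{C}[\mathfrak{g}]$, namely $h_k = f_k$ when $k \leq \text{rk}(G)$ and $h_k = \theta_k$ when $k > \text{rk}(G)$. Since $\Phi$ is a morphism of Poisson varieties (Proposition~\ref{Proposition: Poisson morphism}), this gives $\{\Phi_i,\Phi_j\} = \Phi^*\{h_i,h_j\}$ for all $i,j$. When $i \leq \text{rk}(G)$ we have $h_i = f_i \in \mathbb{C}[\mathfrak{g}]^G$, so \eqref{Equation: Poisson centre} forces $\{h_i,h_j\} = 0$, whence $\{\Phi_i,\Phi_j\} = 0$ for all $1 \leq i \leq \text{rk}(G)$ and all $1 \leq j \leq \dim(G)$, as the definition requires.

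The last and most substantial point is the functional independence of $d\Phi_1,\ldots,d\Phi_{\dim(G)}$ on an open dense subset of $G\times S_{\text{reg}}$, which will simultaneously pin the rank down as $\text{rk}(G)$. Since $\Phi$ is a holomorphic submersion onto $\mathfrak{g}_{\text{reg}}$ (Proposition~\ref{Proposition: Surjective submersion}), at every point the pullback on cotangent spaces induced by $\Phi$ is injective, and $\Phi$ is an open map; so it suffices to exhibit an open dense subset $W\subseteq\mathfrak{g}_{\text{reg}}$ on which the $1$-forms $df_1,\ldots,df_{\text{rk}(G)}, d\theta_{\text{rk}(G)+1},\ldots,d\theta_{\dim(G)}$ are linearly independent, for then their $\Phi$-pullbacks are linearly independent on the open dense set $\Phi^{-1}(W)$. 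To produce $W$, I would compute the $\dim(G)\times\dim(G)$ Jacobian of the map $x\mapsto(f_1(x),\ldots,f_{\text{rk}(G)}(x),\theta_{\text{rk}(G)+1}(x),\ldots,\theta_{\dim(G)}(x))$ in the coordinates $\theta_1,\ldots,\theta_{\dim(G)}$: its lower-right $(\dim(G)-\text{rk}(G))$-block is the identity and its lower-left block vanishes, so its determinant equals $\det M$, where $M$ is the $\text{rk}(G)\times\text{rk}(G)$ submatrix of $[d_xF]$ formed by the first $\text{rk}(G)$ columns (notation as in \eqref{Equation: First Jacobian}). By \eqref{Equation: Nonvanishing minor}, $\det M$ is a polynomial on $\mathfrak{g}$ that is nonzero at some point of $\mathfrak{g}_{\text{reg}}$, so its non-vanishing locus is a nonempty Zariski-open, hence dense, subset of $\mathfrak{g}$; taking $W$ to be its intersection with the nonempty Zariski-open set $\mathfrak{g}_{\text{reg}}$ does the job. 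Assembling the three steps shows that $\Phi_1,\ldots,\Phi_{\dim(G)}$ is an integrable system of rank $\text{rk}(G)$.

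I expect the functional-independence step to demand the most care. The two things to get right are the reduction along the submersion $\Phi$ — that pullback of $1$-forms preserves linear independence (because the codifferential is injective) and that $\Phi^{-1}$ sends open dense subsets of $\mathfrak{g}_{\text{reg}}$ to open dense subsets of $G\times S_{\text{reg}}$ (because a submersion is open) — and the block-triangular Jacobian identity reducing the full $\dim(G)\times\dim(G)$ determinant to the minor $M$ singled out in \eqref{Equation: Nonvanishing minor}. The Poisson-bracket step and the dimension count are then essentially bookkeeping.
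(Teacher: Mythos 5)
Your proposal is correct and follows essentially the same route as the paper: Poisson-commutativity via Proposition~\ref{Proposition: Poisson morphism} and \eqref{Equation: Poisson centre}, and functional independence via the auxiliary map $H$ whose block upper-triangular Jacobian reduces to the minor $M$ of \eqref{Equation: Nonvanishing minor}, combined with $\Phi$ being a submersion onto $\mathfrak{g}_{\text{reg}}$. The only (harmless) difference is bookkeeping: you establish density of the good locus downstairs and pull it back using openness of the submersion, whereas the paper pulls back the determinant function $\rho$ and uses that the non-vanishing locus of a not-identically-zero holomorphic function on $G\times S_{\text{reg}}$ is dense.
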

	
	\begin{proof}
		Using \eqref{Equation: Poisson centre}, one sees that 
		$$\{f_i,h\}=0\text{ for all } i\in\{1,\ldots,\text{rk}(G)\}\text{ and }h\in\mathbb{C}[\mathfrak{g}].$$ 
		Since $\Phi$ is a Poisson morphism (by Proposition \ref{Proposition: Poisson morphism}), it follows that
		$$\{f_i\circ\Phi,h\circ\Phi\}=0\text{ for all } i\in\{1,\ldots,\text{rk}(G)\}\text{ and }h\in\mathbb{C}[\mathfrak{g}].$$
		Replacing $f_i\circ\Phi$ with $\Phi_i$ and choosing $h$ appropriately, we obtain 
		$$\{\Phi_i,\Phi_j\}=0\text{ for all }i\in\{1,\ldots,\text{rk}(G)\},\text{ }j\in\{1,\ldots,\dim(G)\}.$$
		
		It remains only to prove that $d\Phi_1,d\Phi_2,\ldots,d\Phi_{\dim(G)}$ are linearly independent on an open dense subset of $G\times S_{\text{reg}}$. To this end, consider the holomorphic map
		\begin{eqnarray}H\;:\;\mathfrak{g} & \rightarrow & \mathbb{C}^{\dim(G)}\nonumber\\ x & \mapsto & (f_1(x),\ldots,f_{\text{rk}(G)}(x),\theta_{\text{rk}(G)+1}(x),\ldots,\theta_{\dim(G)}(x)),\nonumber\end{eqnarray} and note that
		$$(H\circ\Phi)(g,x)=(\Phi_1(g,x),\Phi_2(g,x),\ldots,\Phi_{\dim(G)}(g,x)),\quad (g,x)\in G\times S_{\text{reg}}.$$
		It follows that the linear independence of $d\Phi_1,d\Phi_2,\ldots,d\Phi_{\dim(G)}$ at a point is equivalent to $d(H\circ\Phi)$ having full rank at the same point. Also, as $\Phi$ is a submersion (by Proposition \ref{Proposition: Surjective submersion}), $d(H\circ\Phi)$ has full rank at $(g,x)\in G\times S_{\text{reg}}$ if and only if $dH$ has full rank at $\Phi(g,x)$. By virtue of these last two sentences, it will suffice to prove that the open set
		\begin{equation}\label{Equation: First open set}
		U:=\Phi^{-1}\big(\{x\in\mathfrak{g}:\text{rank}(d_xH)=\dim(G)\}\big)\subseteq G\times S_{\text{reg}}
		\end{equation}
		is dense. Accordingly, recall the Jacobian matrix construction \eqref{Equation: First Jacobian}. One analogously has a Jacobian matrix representative $[d_xH]$ for each linear map $d_xH$, $x\in\mathfrak{g}$. It is not difficult to see that $[d_xH]$ is block upper-triangular with two diagonal blocks, one consisting of the first $\text{rk}(G)$ columns of $[d_xF]$ and the other an identity matrix. The former block, to be denoted $M(x)$, must therefore have determinant equal to that of $[d_xH]$. We shall let $\rho(x)$ denote this common determinant, i.e.
		$$\rho(x):=\det(M(x))=\det([d_xH]),\quad x\in\mathfrak{g}.$$
		Now note that $\text{rank}(d_xH)=\dim(G)$ if and only if $\rho(x)\neq0$, so that \eqref{Equation: First open set} becomes \begin{equation}\label{Equation: Second open set}
		U=\Phi^{-1}(\mathfrak{g}\setminus\rho^{-1}(0))=(G\times S_{\text{reg}})\setminus (\rho\circ\Phi)^{-1}(0).
		\end{equation} 
		Moreover, \eqref{Equation: Nonvanishing minor} implies that $\rho(y)\neq 0$ for some $y\in\mathfrak{g}_{\text{reg}}$. Since $\mathfrak{g}_{\text{reg}}$ is the image of $\Phi$ (by Proposition \ref{Proposition: Surjective submersion}), we can write $y=\Phi(g,x)$ for some $(g,x)\in G\times S_{\text{reg}}$. Note that the condition $\rho(y)\neq 0$ then becomes $(\rho\circ\Phi)(g,x)\neq 0$, meaning that $\rho\circ\Phi$ is not identically zero. As a holomorphic function with this property, the complement of its vanishing locus is necessarily dense in $G\times S_{\text{reg}}$. This complement is precisely $U$ by \eqref{Equation: Second open set}, completing the proof. 
	\end{proof}
	
	\subsection{A family of completely integrable systems}\label{Subsection: A family of completely integrable systems}
	While Theorem \ref{Theorem: An integrable system} provides an integrable system, the system itself is not completely integrable. Indeed, the dimension of $G\times S_{\text{reg}}$ (equal to $\dim(G)+\text{rk}(G)$) is more than twice the rank of this integrable system (equal to $\text{rk}(G)$, by Theorem \ref{Theorem: An integrable system}). In what follows, however, we will show that $G\times S_{\text{reg}}$ carries a family of completely integrable systems parametrized by the regular semisimple elements $\mathfrak{g}_{\text{reg}}\cap\mathfrak{g}_{\text{ss}}$. Our arguments will make extensive use of results on maximal Poisson-commutative subalgebras of polynomial algebras, developed by Mishchenko and Fomenko in \cite{MishchenkoEuler} and summarized by Rybnikov in \cite{Rybnikov}. In more detail, we may associate to each $\beta\in\mathfrak{g}_{\text{reg}}\cap\mathfrak{g}_{\text{ss}}$ the following family of polynomials:
	\begin{equation}\label{Equation: Family of polynomials}f_{ij}^{\beta}:=(\partial_{\beta})^j(f_i)\in\mathbb{C}[\mathfrak{g}],\quad i=1,\ldots,\text{rk}(G),\quad j=0,\ldots,\deg(f_i),\end{equation}
	where $\partial_{\beta}$ is the operator for taking a directional derivative in the direction $\beta$. The following is one of Mishchenko and Fomenko's results, as presented in Section 2 of \cite{Rybnikov}.
	
	\begin{theorem}[Mishchenko-Fomenko]\label{Theorem: Mishchenko and Fomenko's theorem}
		If $\beta\in\mathfrak{g}_{\emph{reg}}\cap\mathfrak{g}_{\emph{ss}}$, then \eqref{Equation: Family of polynomials} is a list of $\frac{1}{2}(\dim(G)+\emph{rk}(G))$ algebraically independent polynomials on $\mathfrak{g}$, and these polynomials generate a maximal Poisson-commutative subalgebra of $\mathbb{C}[\mathfrak{g}]$. 	
	\end{theorem}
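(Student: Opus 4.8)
The plan is to establish this by the classical \emph{argument shift} argument of \cite{MishchenkoEuler} (see also \cite{Rybnikov}), which I would organize around two largely independent ingredients: (i) \emph{Poisson-commutativity} of the family \eqref{Equation: Family of polynomials}, which holds for \emph{any} $\beta\in\mathfrak{g}$ and is a formal consequence of the $G$-invariance of the $f_i$; and (ii) \emph{algebraic independence} of these polynomials at a generic point of $\mathfrak{g}$, which is where the hypothesis $\beta\in\mathfrak{g}_{\text{reg}}\cap\mathfrak{g}_{\text{ss}}$ is used essentially. Once (i) and (ii) are in hand, maximality is automatic: the generic symplectic leaves of the Poisson variety $\mathfrak{g}$ are the regular adjoint orbits, which have codimension $\text{rk}(G)$, so every Poisson-commutative subalgebra of $\mathbb{C}[\mathfrak{g}]$ has transcendence degree at most $\tfrac12(\dim(G)-\text{rk}(G))+\text{rk}(G)=\tfrac12(\dim(G)+\text{rk}(G))$, and a Poisson-commutative subalgebra attaining this bound cannot be properly enlarged within the Poisson-commutative subalgebras. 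The same number accounts for the cardinality claim: after discarding the constant polynomials $(\partial_\beta)^{\deg(f_i)}(f_i)$, the list \eqref{Equation: Family of polynomials} consists of $\sum_{i=1}^{\text{rk}(G)}\deg(f_i)=\tfrac12(\dim(G)+\text{rk}(G))$ polynomials, by the classical identity relating the degrees of the basic invariants to the sum of the exponents of $\mathfrak{g}$.

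For ingredient (i), fix indices $i$ and $k$ and consider the one-parameter families $F^t:=f_i(\,\cdot\,+t\beta)$ and $G^s:=f_k(\,\cdot\,+s\beta)$ in $\mathbb{C}[\mathfrak{g}]$. Writing the Lie--Poisson bracket as $\{p,q\}(x)=\langle x,[d_xp,d_xq]\rangle$ under the identification \eqref{Equation: Isomorphism of representations}, and using the infinitesimal invariance $[d_yf_i,y]=0$ valid for every $y\in\mathfrak{g}$, I would compute $\{F^t,G^s\}(x)$ in two ways --- transporting the bracket onto the first factor and onto the second factor --- to obtain $\{F^t,G^s\}(x)=-t\,\langle\beta,[d_{x+t\beta}f_i,d_{x+s\beta}f_k]\rangle=-s\,\langle\beta,[d_{x+t\beta}f_i,d_{x+s\beta}f_k]\rangle$. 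Subtracting gives $(t-s)\langle\beta,[d_{x+t\beta}f_i,d_{x+s\beta}f_k]\rangle=0$, so $\{F^t,G^s\}$ vanishes identically for $t\neq s$ and hence, being polynomial in $(s,t)$, for all $s,t$. Expanding $F^t=\sum_j\tfrac{t^j}{j!}f_{ij}^\beta$ and $G^s=\sum_l\tfrac{s^l}{l!}f_{kl}^\beta$ and comparing coefficients of $s^lt^j$ yields $\{f_{ij}^\beta,f_{kl}^\beta\}=0$ for all admissible indices.

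For ingredient (ii), the starting point is that differentiating the same expansion gives, for every $x$ and every $t$, the identity $\sum_j\tfrac{t^j}{j!}\,d_x(f_{ij}^\beta)=d_x\big(f_i(\,\cdot\,+t\beta)\big)=d_{x+t\beta}f_i$, since translation by $t\beta$ has trivial derivative. Consequently the span of $\{d_x(f_{ij}^\beta)\}_{i,j}$ coincides with the span of $\{d_{x+t\beta}f_i\}_{i,t}$, each family being a Vandermonde-type invertible combination of the other once enough values of $t$ are used. Choosing $x$ generic so that the affine line $x+\mathbb{C}\beta$ meets $\mathfrak{g}\setminus\mathfrak{g}_{\text{reg}}$ in only finitely many points, Kostant's theorem in the form \eqref{Equation: Functional independence} (together with the elementary inclusion $d_yf_i\in Z_{\mathfrak{g}}(y)$ under \eqref{Equation: Isomorphism of representations}) shows that $\{d_{x+t\beta}f_i\}_i$ spans $Z_{\mathfrak{g}}(x+t\beta)$ for all but finitely many $t$. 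Hence $\mathrm{span}\{d_x(f_{ij}^\beta)\}_{i,j}=\sum_{t\in\mathbb{C}}Z_{\mathfrak{g}}(x+t\beta)$, and algebraic independence of the family is equivalent to this sum having dimension exactly $\tfrac12(\dim(G)+\text{rk}(G))$ for generic $x$. This is the crux of the argument, and I expect the reverse inequality here to be the main obstacle: the inequality ``$\leq$'' is cheap, since $\sum_t Z_{\mathfrak{g}}(x+t\beta)$ is isotropic for the constant ``frozen'' form $\omega_\beta:=\langle\beta,[\,\cdot\,,\,\cdot\,]\rangle$ --- the cross terms vanish by the same $(t-s)$-computation as in (i), and the diagonal terms vanish because centralizers of regular elements are abelian --- so its dimension is at most $\dim Z_{\mathfrak{g}}(\beta)+\tfrac12(\dim(G)-\text{rk}(G))=\tfrac12(\dim(G)+\text{rk}(G))$. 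For the lower bound I would use that $\beta$ is regular semisimple: diagonalizing $\adj_\beta$ yields a root decomposition $\mathfrak{g}=\mathfrak{h}\oplus\bigoplus_{\alpha\in\Delta}\mathfrak{g}_\alpha$ with $\mathfrak{h}=Z_{\mathfrak{g}}(\beta)$ a Cartan subalgebra, and one analyzes $Z_{\mathfrak{g}}(x+t\beta)$ through this decomposition, checking that for generic $x$ the sum $\sum_t Z_{\mathfrak{g}}(x+t\beta)$ contains $\mathfrak{h}$ together with one line in each two-dimensional block $\mathfrak{g}_\alpha\oplus\mathfrak{g}_{-\alpha}$, forcing its dimension to be $\text{rk}(G)+\tfrac12(\dim(G)-\text{rk}(G))=\tfrac12(\dim(G)+\text{rk}(G))$.

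Combining the two ingredients finishes the argument: the subalgebra of $\mathbb{C}[\mathfrak{g}]$ generated by the $f_{ij}^\beta$ is Poisson-commutative by (i) and has transcendence degree $\tfrac12(\dim(G)+\text{rk}(G))$ by (ii), hence is a maximal Poisson-commutative subalgebra by the leaf-dimension bound recalled in the first paragraph, and its list of generators has the asserted cardinality and algebraic independence. Throughout, regular semisimplicity of $\beta$ is genuinely used only in the dimension computation for $\sum_{t\in\mathbb{C}}Z_{\mathfrak{g}}(x+t\beta)$, which I regard as the single hard step.
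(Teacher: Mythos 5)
The paper itself offers no proof of this statement: it is imported as a black-box citation, ``as presented in Section 2 of \cite{Rybnikov}'', so there is no in-paper argument to compare yours against, and your attempt has to stand on its own as a proof of the classical Mishchenko--Fomenko result. Judged that way, your ingredient (i) is the standard shift argument and is correct and essentially complete, and your handling of the cardinality (discarding the constants $(\partial_\beta)^{\deg f_i}f_i$, which the paper's indexing technically includes) is fine. For ingredient (ii) your reduction is the right one, but the decisive step is not proved: the lower bound $\dim\sum_{t}Z_{\mathfrak g}(x+t\beta)\geq\tfrac12(\dim(G)+\mathrm{rk}(G))$ for generic $x$ is exactly the point you yourself call ``the single hard step'', and the structural claim that the sum contains $\mathfrak h$ plus one line in each $\mathfrak g_\alpha\oplus\mathfrak g_{-\alpha}$ is asserted (``one analyzes\dots checking that\dots'') rather than argued. (A smaller imprecision: at non-regular points of the line $x+\mathbb{C}\beta$ the differentials $d_{x+t\beta}f_i$ need not span the centralizer, so your asserted equality of spans is not automatic; this is harmless because the irregular locus has codimension $3$ in $\mathfrak g$, so a generic line in the direction $\beta$ avoids it entirely --- but that observation should be made.)

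The genuine gap is the claim that maximality is ``automatic'' once the transcendence-degree bound $\tfrac12(\dim(G)+\mathrm{rk}(G))$ is attained. Attaining the bound does not preclude a proper Poisson-commutative enlargement: if $b\in\mathbb{C}[\mathfrak g]\setminus A_\beta$ is algebraic over $A_\beta$ (or its fraction field), then differentiating a minimal polynomial relation shows $\{a,b\}=0$ for every $a\in A_\beta$, so $A_\beta[b]$ is a strictly larger Poisson-commutative subalgebra of the \emph{same} transcendence degree. Maximality is therefore equivalent to the additional assertion that $A_\beta$ is algebraically closed in $\mathbb{C}[\mathfrak g]$, and this is a genuinely separate theorem, not a formal consequence of the dimension count: Mishchenko and Fomenko proved completeness (the independence statement), while maximality for regular semisimple $\beta$ is due to Tarasov, later extended to all regular $\beta$ by Panyushev and Yakimova. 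To close this you would need an actual argument for algebraic closedness --- for instance, showing the differentials of the $f^\beta_{ij}$ are linearly independent away from a subset of codimension at least two and invoking the standard codimension-two criterion --- rather than the leaf-dimension bound alone.
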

	
	Now fix $\beta\in\mathfrak{g}_{\text{reg}}\cap\mathfrak{g}_{\text{ss}}$ and consider the holomorphic functions on $G\times S_{\text{reg}}$ obtained by pulling back those in \eqref{Equation: Family of polynomials} along $\Phi$, i.e.
	\begin{equation}\label{Equation: New family of functions}\Phi_{ij}^{\beta}:=f_{ij}^{\beta}\circ\Phi, \quad i=1,\ldots,\text{rk}(G),\quad j=0,\ldots,\deg(f_i).\end{equation}
	
	\begin{theorem}\label{Theorem: Family of completely integrable systems}
		If $\beta\in\mathfrak{g}_{\emph{reg}}\cap\mathfrak{g}_{\emph{ss}}$, then the functions in \eqref{Equation: New family of functions} form a completely integrable system on $G\times S_{\emph{reg}}$.
	\end{theorem}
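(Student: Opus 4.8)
The plan is to verify the three conditions of Definition~\ref{Definition: NCIS} directly, working with $n := \dim(G\times S_{\text{reg}}) = \dim(G)+\text{rk}(G)$ (using $\dim S_{\text{reg}} = \text{rk}(G)$ as in Corollary~\ref{Corollary: Connected components}) and rank parameter $r := n/2$. By Theorem~\ref{Theorem: Mishchenko and Fomenko's theorem}, the family \eqref{Equation: Family of polynomials} consists of exactly $\tfrac12(\dim(G)+\text{rk}(G)) = n-r$ algebraically independent polynomials generating a Poisson-commutative subalgebra of $\mathbb{C}[\mathfrak{g}]$; hence \eqref{Equation: New family of functions} is a list of $n-r$ holomorphic functions on $G\times S_{\text{reg}}$, and the inequality $r\le n/2$ holds with equality, which is precisely what "completely integrable" demands. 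So it remains to check Poisson-commutativity and functional independence.

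For Poisson-commutativity I would invoke that $\Phi$ is a morphism of Poisson varieties (Proposition~\ref{Proposition: Poisson morphism}), so that $\{p\circ\Phi,\,q\circ\Phi\} = \{p,q\}\circ\Phi$ for all $p,q\in\mathbb{C}[\mathfrak{g}]$. Taking $p = f_{ij}^{\beta}$ and $q = f_{kl}^{\beta}$ and using the Poisson-commutativity assertion of Theorem~\ref{Theorem: Mishchenko and Fomenko's theorem} yields $\{\Phi_{ij}^{\beta},\Phi_{kl}^{\beta}\}=0$ for all admissible indices, which is more than the second bullet of Definition~\ref{Definition: NCIS} requires.

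Functional independence is the only step needing an argument, and it parallels the last part of the proof of Theorem~\ref{Theorem: An integrable system}. Let $P\colon\mathfrak{g}\to\mathbb{C}^{\,n-r}$ be the polynomial map whose components are the $f_{ij}^{\beta}$, so that $P\circ\Phi$ has components the $\Phi_{ij}^{\beta}$. Algebraic independence of the $f_{ij}^{\beta}$ (Theorem~\ref{Theorem: Mishchenko and Fomenko's theorem}) makes $P$ dominant, so $W := \{x\in\mathfrak{g} : \text{rank}(d_x P) = n-r\}$ is a nonempty Zariski-open, hence dense, subset of $\mathfrak{g}$, with complement a proper closed subvariety. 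At a point $p\in G\times S_{\text{reg}}$ the forms $d_p\Phi_{ij}^{\beta}$ are linearly independent if and only if $d_p(P\circ\Phi)$ has rank $n-r$; since $\Phi$ is a submersion (Proposition~\ref{Proposition: Surjective submersion}) and $d_p(P\circ\Phi) = d_{\Phi(p)}P\circ d_p\Phi$ with $d_p\Phi$ surjective, this happens if and only if $d_{\Phi(p)}P$ has rank $n-r$, i.e. $\Phi(p)\in W$. Thus the locus of functional independence equals $\Phi^{-1}(W)$. Since $\Phi$ surjects onto $\mathfrak{g}_{\text{reg}}$ and $W\cap\mathfrak{g}_{\text{reg}}\neq\emptyset$ (both $W$ and $\mathfrak{g}_{\text{reg}}$ being dense open in the irreducible variety $\mathfrak{g}$), the set $\Phi^{-1}(\mathfrak{g}\setminus W)$ is a proper closed analytic subset of the connected manifold $G\times S_{\text{reg}}$, so $\Phi^{-1}(W)$ is open and dense. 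This gives the third bullet of Definition~\ref{Definition: NCIS} and completes the proof that \eqref{Equation: New family of functions} is an integrable system of rank $r=n/2$, hence completely integrable.

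I do not anticipate a genuine obstacle here: the statement is essentially an assembly of Theorem~\ref{Theorem: Mishchenko and Fomenko's theorem} with the Poisson and submersion properties of $\Phi$ already established in Propositions~\ref{Proposition: Surjective submersion} and \ref{Proposition: Poisson morphism}, closely mirroring the proof of Theorem~\ref{Theorem: An integrable system}. The one point deserving care is the passage from "generic functional independence of the $f_{ij}^{\beta}$ on $\mathfrak{g}$" to "generic functional independence of the $\Phi_{ij}^{\beta}$ on $G\times S_{\text{reg}}$", which is handled by surjectivity of $\Phi$ together with irreducibility (connectedness) of $G\times S_{\text{reg}}$, exactly as in the final paragraph of that earlier proof.
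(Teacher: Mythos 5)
Your proposal is correct and follows essentially the same route as the paper: counting via Theorem~\ref{Theorem: Mishchenko and Fomenko's theorem}, Poisson-commutativity pulled back through the Poisson morphism $\Phi$, and functional independence obtained from the generic full-rank locus of the $f_{ij}^{\beta}$ on $\mathfrak{g}$ transported along the submersion $\Phi$ with image $\mathfrak{g}_{\text{reg}}$. The only cosmetic difference is that the paper pulls back a single nonvanishing maximal-minor determinant $\rho$ and uses density of the complement of its zero locus, whereas you work directly with the full-rank locus $W$ and the density of the complement of a proper analytic subset; these are the same argument in different packaging.
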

	
	\begin{proof}
		We begin by noting that \eqref{Equation: New family of functions} is a list of $\frac{1}{2}(\dim(G)+\text{rk}(G))$ functions (see Theorem \ref{Theorem: Mishchenko and Fomenko's theorem}). This number is precisely half the dimension of $G\times S_{\text{reg}}$, so we need only prove that the $\Phi_{ij}^{\beta}$ Poisson commute in pairs and have linearly independent differentials on an open dense subset of $G\times S_{\text{reg}}$. To establish the first of these properties, recall from Proposition \ref{Proposition: Poisson morphism} that $\Phi$ is a Poisson morphism. In particular, if two functions on $\mathfrak{g}$ Poisson commute, their respective pullbacks under $\Phi$ must also Poisson commute. Since the $f_{ij}^{\beta}$ Poisson commute in pairs (by Theorem \ref{Theorem: Mishchenko and Fomenko's theorem}), it follows that the same must be true of the pullbacks $\Phi^*(f_{ij}^{\beta})=\Phi_{ij}^{\beta}$.
		
		To see that the $\Phi_{ij}^{\beta}$ have linearly independent differentials on an open dense subset of $G\times S_{\text{reg}}$, set $p:=\frac{1}{2}(\dim(G)+\text{rk}(G))$, re-index the $f_{ij}^{\beta}$ as $f_1^{\beta},f_2^{\beta},\ldots,f_p^{\beta}$, and consider the holomorphic map
		\begin{eqnarray}H\;:\;\mathfrak{g} & \rightarrow & \mathbb{C}^{p}\nonumber\\ x & \mapsto & (f_1^{\beta}(x),f_2^{\beta}(x),\ldots,f_{p}^{\beta}(x)).\nonumber\end{eqnarray}
		By arguments analogous to those appearing in the proof of Theorem \ref{Theorem: An integrable system}, we need only show that the open subset $$
		U:=\Phi^{-1}\big(\{x\in\mathfrak{g}:\text{rank}(d_xH)=p\}\big)\subseteq G\times S_{\text{reg}}$$ is dense. To this end, recall the global holomorphic coordinates on $\mathfrak{g}$ fixed in Section \ref{Subsection: An integrable system of rank equal to rk(G)}, and let $[d_xH]$ denote the resulting Jacobian matrix representative of $d_xH$. Using the fact that the $f_{ij}^{\beta}$ are algebraically independent (see Theorem \ref{Theorem: Mishchenko and Fomenko's theorem}), it is straightforward to find a $p\times p$ submatrix $M(x)$ of $[d_xH]$ whose determinant is not identically zero as a function of $x$ (see \cite[Section 9]{Ito}, for instance). In other words, the function
		\begin{eqnarray}\rho\;:\;\mathfrak{g} & \rightarrow & \mathbb{C}\nonumber\\ x & \mapsto & \det(M(x))\nonumber\end{eqnarray} is not identically zero. Now note that $\text{rank}(d_xH)=p$ whenever $\rho(x)\neq 0$, or equivalently
		$$\mathfrak{g}\setminus\rho^{-1}(0)\subseteq \{x\in\mathfrak{g}:\text{rank}(d_xH)=p\}.$$ Taking preimages under $\Phi$, we obtain
		$$(G\times S_{\text{reg}})\setminus(\rho\circ\Phi)^{-1}(0)\subseteq U.$$ We are therefore reduced to showing that $(G\times S_{\text{reg}})\setminus(\rho\circ\Phi)^{-1}(0)$ is dense in $G\times S_{\text{reg}}$, for which our argument is similar to one given in the proof of Theorem \ref{Theorem: An integrable system}. Specifically, as $\rho$ is not identically zero, the complement of its vanishing locus is nonempty and must intersect the dense subset $\mathfrak{g}_{\text{reg}}\subseteq\mathfrak{g}$. Since $\mathfrak{g}_{\text{reg}}$ is the image of $\Phi$ (by Proposition \ref{Proposition: Surjective submersion}), it follows that $\rho\circ\Phi$ is not identically zero. As argued in the proof of Theorem \ref{Theorem: An integrable system}, this shows that $(G\times S_{\text{reg}})\setminus (\rho\circ\Phi)^{-1}(0)$ is dense in $G\times S_{\text{reg}}$.
	\end{proof}   	  
	\bibliographystyle{acm} 
	\bibliography{AbsIntSys}
\end{document}